\newcommand{\ceil}[1]{\left \lceil #1 \right \rceil}
\newcommand{\floor}[1]{\left \lfloor #1 \right \rfloor}
\newtheorem{theorem}{Theorem}
\newtheorem{lemma}[theorem]{Lemma}
\theoremstyle{definition}
\newtheorem{definition}[theorem]{Definition}
\newtheorem{example}[theorem]{Example}
\theoremstyle{corollary}
\newtheorem{corollary}[theorem]{Corollary}
\theoremstyle{conjecture}
\theoremstyle{remark}
\theoremstyle{proposition}
\begin{document}
\parskip0pt
\parindent15pt
\baselineskip15pt    

\title{Lower Bounds for Multicolor Star-Critical Ramsey Numbers} 

\author[M. Budden]{Mark Budden}
\address{Department of Mathematics and Computer Science \\
Western Carolina University \\
Cullowhee, NC 28723 USA}
\email{mrbudden@email.wcu.edu}

\author[Y.S. Khobagade]{Yash Shamsundar Khobragade}
\address{Department of Mathematics \\ Indian Institute of Science Education and Research Bhopal \\ Bhopal 462066 India}
\email{yashk19@iiserb.ac.in}

\author[S. Sarkar]{Siddhartha Sarkar}
\address{Department of Mathematics \\ Indian Institute of Science Education and Research Bhopal \\ Bhopal 462066 India}
\email{sidhu@iiserb.ac.in}

\subjclass[2020]{Primary 05C55, 05D10; Secondary 05C40}
\keywords{multicolor Ramsey numbers, edge colorings, paths}

\begin{abstract} The star-critical Ramsey number is a refinement of the concept of a Ramsey number. In this paper, we give equivalent criteria for which the star-critical Ramsey number vanishes.  Next, we provide a new general lower bound for multicolor star-critical Ramsey numbers whenever it does not vanish.  As an application, we evaluate $r_*(P_k, P_3, P_3)$, where $P_n$ is a path of order $n$.  In the process of proving these results, we also show that $r_*(C_5, P_3)=3$, where $C_5$ is a cycle of order $5$.  \end{abstract}

\maketitle

\section{Introduction}\label{intro}

We assume that all graphs are finite and simple in that they do not contain any loops or multiedges. For positive integers $s$ and $t$ such that $s \leq t$, we write $[s,t] := \{ i  \in {\mathbb{N}} \ | \  s \leq i \leq t \}$. The notations $K_n$, $P_n$, and $C_n$ represent the complete graph, the path, and the cycle of order $n$, respectively.  A {\it $t$-coloring} of a graph $G=(V(G),E(G))$ is a map $$f:E(G)\longrightarrow [1,t].$$  Such an edge coloring is not assumed to be surjective nor is it assumed to be proper (i.e., adjacent edges may receive the same color).
For graphs $G_1, G_2, \dots , G_t$, the {\it Ramsey number} $r(G_1, G_2, \dots , G_t)$ is the least natural number $p$ such that every $t$-coloring of the edges of $K_p$ contains a subgraph that is isomorphic to $G_i$ in color $i$, for some $i\in [1,t]$.  A {\it critical coloring} for $r(G_1, G_2, \dots , G_t)$ is a $t$-coloring of $K_{r(G_1, G_2, \dots , G_t)-1}$ that avoids a copy of $G_i$ in color $i$, for all $i\in [1,t]$.  The current known values of multicolor Ramsey numbers can be found in Radziszowski's dynamic survey \cite{Rad}.

In 2010, Jonelle Hook \cite{Hook} introduced a refinement of the Ramsey number known as a star-critical Ramsey number.  In order to define it, we must first introduce the notation $K_n\sqcup K_{1,k}$ to be the graph formed by taking $K_n$, adding in a new vertex, then joining that vertex to exactly $k$ vertices in the $K_n$.  The {\it star-critical Ramsey number} $r_*(G_1, G_2, \dots , G_t)$ is then defined to be the least $k$ such that every $t$-coloring of $K_{r(G_1, G_2, \dots , G_t)-1}\sqcup K_{1,k}$ contains a subgraph that is isomorphic to $G_i$ in color $i$, for some $i\in [1,t]$.
Note that \begin{equation} 1\le r_*(G_1, G_2, \dots , G_t)\le r(G_1, G_2, \dots , G_t)-1,\label{starcritgen} \end{equation} if the graphs $G_1, G_2, \dotsc, G_t$ are connected and of order at least $2.$  In Section \ref{star-crit-zero}, we will completely determine the scenarios when $r_{\ast}(G_1, G_2, \dots , G_t)$ vanishes. In the special case where $t=1$, we note that $r(G)=|V(G)|$ and $r_*(G)=\delta (G)$, where $\delta (G)$ is the {\it minimum degree} of $G$: $$\delta (G):=\min \{{\rm deg}_G(v) \ | \ v\in V(G)\}.$$

A {\it proper vertex coloring} of a graph $G$ is a map $$c:V(G)\longrightarrow \{1, 2, \dots \ell\}$$ such that $c(u)\ne c(v)$ when $uv\in E(G)$.  The {\it chromatic number} of $G$, denoted $\chi (G)$ is the least $\ell$ for which such a coloring exists.  The {\it chromatic surplus} of $G$, denoted $s(G)$ is the smallest order of a color class among all proper vertex colorings of $G$ that use exactly $\chi (G)$ colors. Now let ${\mathcal{C}}$ denote the collection of all proper vertex colorings of $G$ that admit a color class of size $s(G)$. For any $c \in {\mathcal{C}}$, let $V_1, V_2, \dots , V_{\chi(G)}$ be the distinct color classes in the coloring $c$, where $$s(G)=|V_1|\le |V_2| \le \cdots \le |V_{\chi(G)}|.$$ If $v\in V_1$, let ${\rm deg}_{V_i}(v)$ be the number of edges that join $v$ to $V_i$, where $2\le i\le \chi (G)$. Then define $$\tau_{c}(G):=\min \{ {\rm deg}_{V_i}(v) \  | \ v\in V_1 \ \mbox{and} \ 2\le i\le \chi (G)\}.$$ and  $$\tau(G) := \min \{ \tau_{c}(G) \ | \ c\in \mathcal{C}\}.$$   

A graph is called {\it connected} if there exists a path joining every distinct pair of vertices.  If a graph is not connected, it is called {\it disconnected}, and the maximal connected subgraphs are called its {\it connected components}.  A connected graph has one connected component.  The {\it vertex connectivity} of a graph $G$, denoted $\kappa (G)$, is the minimum number of vertices whose deletion results in a disconnected graph or a single vertex.  

In 1981, Burr \cite{Burr} gave an improvement on Chv\'atal and Harary's \cite{CH3} lower bound for $2$-color Ramsey numbers.  To state Burr's bound, let $c(G_1)$ be the order of a largest connected component of $G_1$. If $c(G_1) \geq s(G_2)$, then $$r(G_1, G_2)\ge (c(G_1)-1)(\chi (G_2)-1)+s(G_2),$$ and with this hypothesis a graph $G_1$ is called {\it $G_2$-good} if the equality holds. More generally, if $r(G_1, G_2, \dotsc, G_{t-1}) \geq s(G_t)$, then
\begin{eqnarray}\label{burr-gen-ineq}
r(G_1, G_2, \dots , G_t)\ge (r(G_1, G_2, \dots , G_{t-1})-1)(\chi (G_t)-1)+s(G_t),
\end{eqnarray} 
and $(G_1, G_2, \dots , G_{t-1})$ is called {\it $G_t$-good} if equality holds.

This paper is organized as follows. 
In Section \ref{star-crit-zero} we provide equivalent criteria for which the star-critical Ramsey number vanishes (Theorem \ref{star-crit-zero-thm}). In Section \ref{genlower}, we provide an overview of the current known lower bounds for star-critical Ramsey numbers (proved in \cite{BD}, \cite{HL1}, \cite{HL2}, and \cite{ZBC}) and we offer an improved multicolor lower bound (Theorem \ref{lower}), as was initiated in \cite{Kho}).  To demonstrate the utility of this new lower bound, we turn our attention in Section \ref{3paths} to the following evaluation (Theorem \ref{k33theorem}):
$$r_*(P_k, P_3, P_3)=\left\{ \begin{array}{ll} 1 & \mbox{if $k=3$} \\ 3 & \mbox{if $k=4$} \\ 4 & \mbox{if $k=5$} \\ 3 & \mbox{if $k\ge 6$.}\end{array}\right.$$  In the process of proving this result on paths, we also show that $r_*(C_5, P_3)=3$.

\section{Star-Critical Ramsey Number Zero: An Exceptional Case}\label{star-crit-zero}

For a collection of graphs $G_1, G_2, \dotsc, G_t ~(t \geq 2)$ of order at least $2$, we will examine a situation where the star-critical Ramsey number is $0$, as opposed to Inequality (\ref{starcritgen}) in Section \ref{intro}, where the graphs were assumed to be connected. We must first introduce some notations and terminologies.

A graph $G$ is called {\it discrete} if and only if $\chi(G) = 1$, and it contains an isolated vertex if and only if $\delta(G) = 0$. If $G$ has order at least $2$ we define 
\[
G^{\prime} := 
\begin{cases}
G - w &\mbox{if } \delta(G) = 0 {\mathrm{~and~}} {\mathrm{deg}}_{G}(w) = 0 \\
G &\mbox{if } \delta(G) \geq 1,
\end{cases}
\] 
where ${\mathrm{deg}}_{G}(w)$ denotes the degree of the vertex $w$ in the graph $G$.  Here, $G-w$ is the subgraph of $G$ induced by $V(G)-\{w\}$.
Note that $G^{\prime}$ is well-defined up to the isomorphism of graphs.

Let ${\mathcal{G}}$ denote the multiset ${\mathcal{G}} := \{ G_1, \dotsc, G_t \}$. In the following, we will use the notations $r({\mathcal{G}})$ and $r_*(\mathcal{G})$ to denote the Ramsey number $r(G_1, \dotsc, G_t)$  and star-critical Ramsey number $r_{\ast}(G_1, \dotsc, G_t)$, respectively. Let $I_{00} \subseteq I_{0} \subseteq [1,t]$ be the subsets defined by
\begin{eqnarray*}
I_0 & := & \{ i \in [1,t] \ | \ \delta(G_i) = 0 \}, \\
I_{00} & := & \{ i \in I_0 \ | \  |V(G_i)| = r({\mathcal{G}}) \}.
\end{eqnarray*}

\noindent Next, for a subset $J \subseteq [1,t]$, we define the multiset ${\mathcal{G}}^{\prime}_J = \{ F_1, \dotsc, F_t \}$ given by
\[
F_i := 
\begin{cases}
G^{\prime}_i &\mbox{if } i \in J, \\
G_i &\mbox{if } i \in [1,t]- J.
\end{cases}
\]  
The following lemma shows that removing a single isolated vertex from some of the graphs in ${\mathcal{G}}$ might only change the Ramsey number by $1$.

\begin{lemma}\label{bounding-lem}
Let $t \geq 2$ be an integer, and ${\mathcal{G}} = \{ G_i \ | \ i \in [1,t] \}$ be a multiset of simple graphs of order at least $2$. Then for any subset $J \subseteq [1,t]$, we have $r({\mathcal{G}}) - 1 \leq r({\mathcal{G}}^{\prime}_J) \leq r({\mathcal{G}})$.
\end{lemma}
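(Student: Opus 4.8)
The plan is to prove the two inequalities separately, in each case by leveraging a critical coloring (for the Ramsey number one is trying to bound from above) and observing that it remains, or can be lifted to be, a critical coloring for the other Ramsey number. Throughout, I will reduce to the case $J = \{j\}$ a singleton: once $r({\mathcal G}) - 1 \le r({\mathcal G}'_{\{j\}}) \le r({\mathcal G})$ is known for every $j$, the general case follows by applying the singleton result $|J|$ times along a chain of multisets, since modifying the coordinates in $J$ one at a time changes the Ramsey number by at most $1$ at each step and these changes telescope (the upper bound can only ever decrease or stay the same, the lower bound is handled symmetrically). Also I may assume $j \in I_0$, i.e.\ $\delta(G_j) = 0$, since otherwise $G'_j = G_j$ and there is nothing to prove.

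For the upper bound $r({\mathcal G}'_J) \le r({\mathcal G})$: with $J = \{j\}$ and $\delta(G_j)=0$, the graph $G'_j = G_j - w$ is a subgraph of $G_j$, hence any coloring of $K_p$ containing a copy of $G_j$ in color $j$ also contains a copy of $G'_j$ in color $j$; and the graphs $F_i = G_i$ for $i \ne j$ are unchanged. Therefore every $t$-coloring forced to contain some $G_i$ is a fortiori forced to contain some $F_i$, so $r({\mathcal G}'_J) \le r({\mathcal G})$. (If $|V(G_j)| = 1$ for the reduced graph we should note $G_j$ has order at least $2$ so $G'_j$ has order at least $1$; the degenerate order-$1$ case only weakens the requirement, so the bound still holds.)

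For the lower bound $r({\mathcal G}) - 1 \le r({\mathcal G}'_J)$: again take $J = \{j\}$. Let $p := r({\mathcal G}'_{\{j\}})$ and let $f$ be a critical coloring of $K_{p-1}$ for ${\mathcal G}'_{\{j\}}$, i.e.\ $f$ avoids $F_i$ in color $i$ for all $i$; in particular it avoids $G_i$ in color $i$ for $i \ne j$ and avoids $G_j - w$ in color $j$. I want to build from $f$ a $t$-coloring of $K_{p-1}$ (or a slightly smaller complete graph) that avoids $G_i$ in color $i$ for \emph{all} $i$ including $j$, which will show $r({\mathcal G}) - 1 \le p - 1$, i.e.\ $r({\mathcal G}) \le p \le r({\mathcal G}'_{\{j\}}) + 1$, hence $r({\mathcal G})-1 \le r({\mathcal G}'_{\{j\}})$. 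The point is that a copy of $G_j$ in color $j$ inside some host $K_m$, colored by (a restriction of) $f$, would consist of a copy of $G_j - w = G'_j$ together with the extra vertex $w$, which has degree $0$ in $G_j$ and therefore imposes \emph{no} color-$j$ edge constraints — so a color-$j$ copy of $G_j$ exists in $K_m$ iff a color-$j$ copy of $G'_j$ exists among $m-1$ of its vertices. Since $f$ on $K_{p-1}$ already avoids $G'_j$ in color $j$, the only way $K_{p-1}$ with coloring $f$ could contain $G_j$ in color $j$ is impossible — wait, that would give $r({\mathcal G}) \le p - 1$ directly, which is even stronger.

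The subtlety, and the main obstacle, is exactly this over-optimism: a color-$j$ copy of $G_j$ in $K_{p-1}$ needs a color-$j$ copy of $G'_j$ on some $|V(G_j)|-1$ vertices \emph{plus any one more vertex as the isolated vertex} — and that is automatic, so in fact $f$ avoiding $G'_j$ in color $j$ on $K_{p-1}$ means $f$ also avoids $G_j$ in color $j$ on $K_{p-1}$ \emph{provided $p - 1 \ge |V(G_j)|$}, i.e.\ provided there is room for the spare vertex. So the real content is: if $p - 1 < |V(G_j)|$, i.e.\ $r({\mathcal G}'_{\{j\}}) \le |V(G_j)|$, then $K_{p-1}$ is too small to even contain $G_j$ as a subgraph (regardless of color), so again $f$ trivially avoids $G_j$ in color $j$. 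Either way $f$ is a critical coloring for ${\mathcal G}$ on $K_{p-1}$, giving $r({\mathcal G}) \ge p = r({\mathcal G}'_{\{j\}})$, which is stronger than, and in particular implies, the claimed $r({\mathcal G}) - 1 \le r({\mathcal G}'_{\{j\}})$. I expect the $-1$ slack in the statement to be genuinely needed only to absorb boundary phenomena when iterating over a non-singleton $J$ and when the reduced graph $G'_j$ drops to order $1$; I would handle those by carefully tracking, at each step of the telescoping chain, whether the current host complete graph is large enough to contain the (un-reduced) target graph, and invoking the order-$\ge 2$ hypothesis to keep $G'_i$ nonempty. The cleanest writeup will therefore isolate the singleton statement $r({\mathcal G}) - 1 \le r({\mathcal G}'_{\{j\}}) \le r({\mathcal G})$ as the crux and then induct on $|J|$.
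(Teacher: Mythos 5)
Your upper bound is fine and matches the paper's (immediate from $G'_j$ being a subgraph of $G_j$). The lower bound, however, contains a genuine logical error: a critical coloring on $K_{p-1}$ (one avoiding every $G_i$ in color $i$) certifies $r({\mathcal{G}}) \geq p$, not $r({\mathcal{G}}) \leq p$. So your argument --- take a critical coloring $f$ of $K_{p-1}$ for ${\mathcal{G}}'_{\{j\}}$ and observe that it also avoids $G_j$ in color $j$ --- proves $r({\mathcal{G}}) \geq p = r({\mathcal{G}}'_{\{j\}})$, which is just the upper bound $r({\mathcal{G}}'_{\{j\}}) \leq r({\mathcal{G}})$ restated; it does not imply $r({\mathcal{G}}) - 1 \leq r({\mathcal{G}}'_{\{j\}})$, since those two inequalities point in opposite directions ($r({\mathcal{G}}) \geq r({\mathcal{G}}'_{\{j\}})$ is perfectly consistent with $r({\mathcal{G}})$ exceeding $r({\mathcal{G}}'_{\{j\}})$ by an arbitrary amount). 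What $r({\mathcal{G}}) \leq r({\mathcal{G}}'_J) + 1$ actually requires is showing that every $t$-coloring of $K_{r({\mathcal{G}}'_J)+1}$ is forced to contain some $G_i$ in color $i$. The paper does exactly this: delete one vertex $v$, find a copy of some $F_i$ in color $i$ in the remaining $K_{r({\mathcal{G}}'_J)}$, and if $i \in J$ reattach $v$ as the isolated vertex to upgrade the copy of $G'_i$ to a copy of $G_i$. You state the key equivalence (a color-$j$ copy of $G_j$ exists in $K_m$ iff a color-$j$ copy of $G'_j$ exists among $m-1$ of its vertices) but only ever use it in the direction that strengthens a critical coloring; you never use it in the direction that forces $G_j$ from a forced $G'_j$ plus a spare vertex, which is where the content of the lemma lies.

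A second, independent problem is the reduction to singletons: knowing $r({\mathcal{G}}'_{\{j\}}) \geq r({\mathcal{G}}) - 1$ for each $j$ and telescoping over $|J|$ steps only yields $r({\mathcal{G}}'_J) \geq r({\mathcal{G}}) - |J|$, since each step may genuinely cost $1$. The claim that the total loss is at most $1$ for arbitrary $J$ needs the simultaneous argument: one spare vertex serves as the isolated vertex for whichever $G_j$ with $j \in J$ happens to appear, so the losses do not accumulate. This is why the paper treats general $J$ directly rather than inducting on $|J|$.
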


\begin{proof}
Without loss of generality, we may assume that $I_{0} \neq \emptyset$, $J \subseteq I_{0}$ and $\emptyset \neq J = [1,k]$ for some $1 \leq k \leq t$. Since $G^{\prime}_j$ is a subgraph of $G_j$ for each $j \in J$, the second inequality is immediate. We set $n := r({\mathcal{G}})$, and assume that $r({\mathcal{G}}^{\prime}_J) = n - r < n$ for some $n-r \geq 1$. Then, we have $1 \leq r \leq n-1$. It is enough to show that $r=1$. 

Assume that $r \geq 2$. Then, we have $n-r < n-r+1 < n$. We will show that any $t$-coloring of $K_{n-r+1}$ contains a monochromatic copy of $G_i$ is color $i$, for some $i \in [1,t]$, which is a contradiction. 
Let ${\mathcal{H}}$ be a $t$-coloring of $K_{n-r+1}$ that avoids a copy of $G_i$ in color $i$ for all $i \in [1,t] - J$. Let $v$ be any vertex in $K_{n-r+1}$, and consider the restriction ${\overline{\mathcal{H}}}$ of ${\mathcal{H}}$ to the subgraph $K_{n-r+1} - v \cong K_{n-r}$. Since $r({\mathcal{G}}^{\prime}_J) = n - r$, it follows that $K_{n-r+1} - v$ contains a copy of $G^{\prime}_j$ in color $j$, say $K$ in the $t$-coloring ${\overline{\mathcal{H}}}$ for some $j \in J$. Then the disjoint union $K \cup v$ is a subgraph of $K_{n-r+1}$, which is a monochromatic copy of $G_j$ in color $j$ in the $t$-coloring ${\mathcal{H}}$. This contradicts the inequality $n-r+1 < r({\mathcal{G}})$, from which it follows that $r=1$.
\end{proof}

In the following two examples, observe that both of the bounds for the Ramsey number $r({\mathcal{G}}^{\prime}_J)$ given in the above lemma can occur.  Our examples include the known Ramsey numbers $r(P_3, K_{1,3})=5$ \cite{Par} and $r(P_3, P_4)=4$ \cite{GG}.

\begin{example}
Let $G_1 = P_3$, $G_2 = K_{1,3} \cup w$, and $G^{\prime}_2 = G_2 - w$ (see $(i)$, $(ii)$, and $(iii)$ in Figure \ref{ex1}) and we claim that $r(G_1, G_2) = r(G_1, G^{\prime}_2) = 5$.  
\begin{figure}[h]
   \centering
      \includegraphics[width=0.97\textwidth]{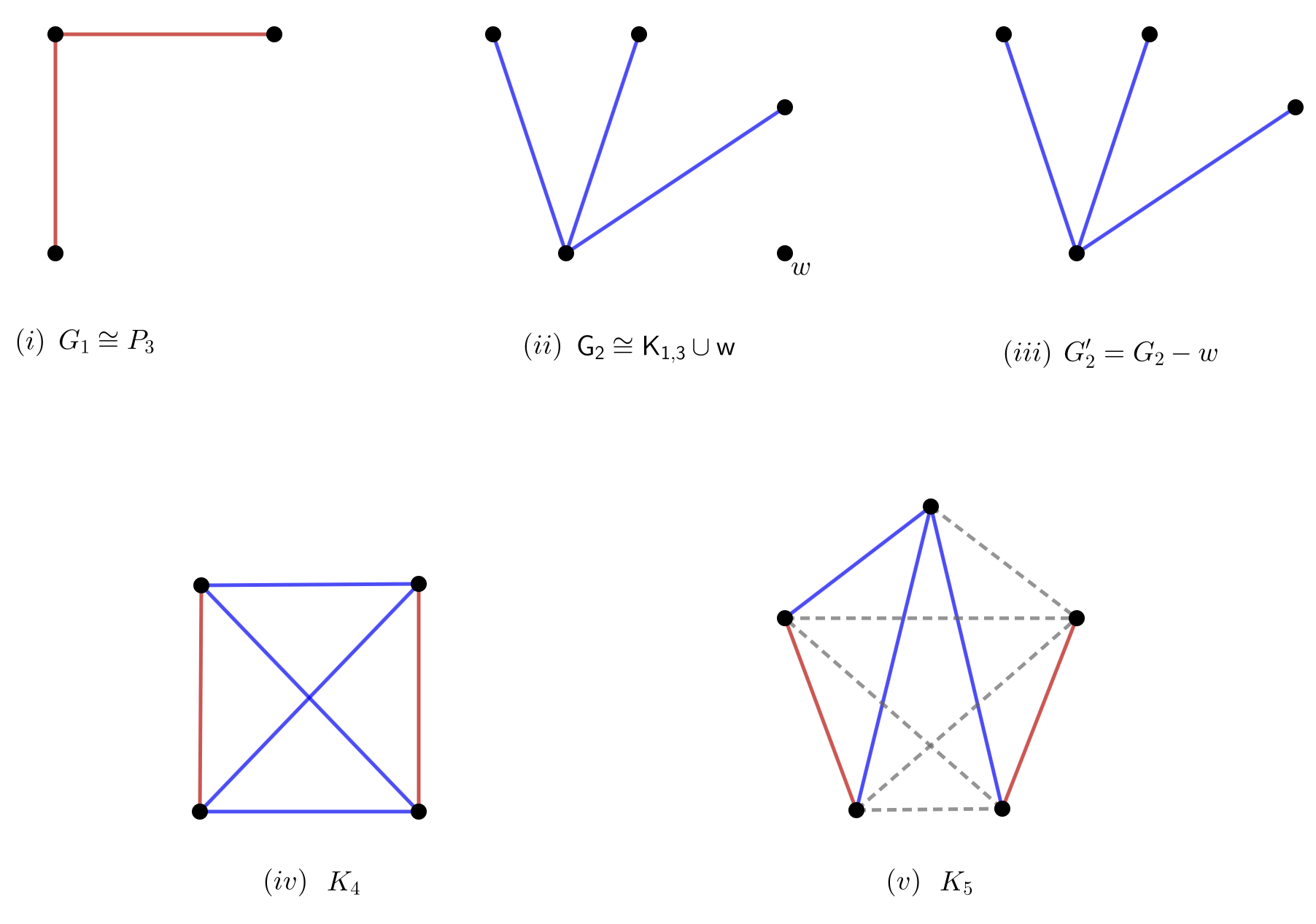}
   \caption{Showing that $r(P_3, K_{1,3}\cup w)=r(P_3, K_{1,3})=5$.}\label{ex1}
\end{figure}
The $2$-coloring of $K_4$ shown in $(iv)$ of Figure \ref{ex1} avoids a red $G_1$ and a blue $G_2$ (and hence, a blue $G^{\prime}_2$). This implies that $r(G_1, G_2)\ge 5$ and $r(G_1, G^{\prime}_2) \geq 5$. 
To verify $r(G_1, G_2) = r(G_1, G^{\prime}_2) = 5$, consider a red-blue coloring of $K_5$ that avoids a red copy of $G_1$. Then there are at most two disjoint red edges, leaving a vertex $v$ which is joined to the remaining vertices by blue edges (see $(v)$ in Figure \ref{ex1}).  The result is a $2$-coloring that contains a blue $G_2$ (and hence, a blue $G_2'$).  It follows that $r(G_1, G_2)=r(G_1, G_2')=5$.
\end{example}

\begin{example} 
Consider $G_1 = P_3, G_2 = P_4 \cup w$, and $G^{\prime}_2 = G_2 - w$ (see $(i)$, $(ii)$, and $(iii)$ in Figure \ref{ex2}), and we claim that $r(G_1,G^{\prime}_2) = 4$, whereas $r(G_1, G_2) = 5$. 
\begin{figure}[h]
  \centering
    \includegraphics[width=0.96\textwidth]{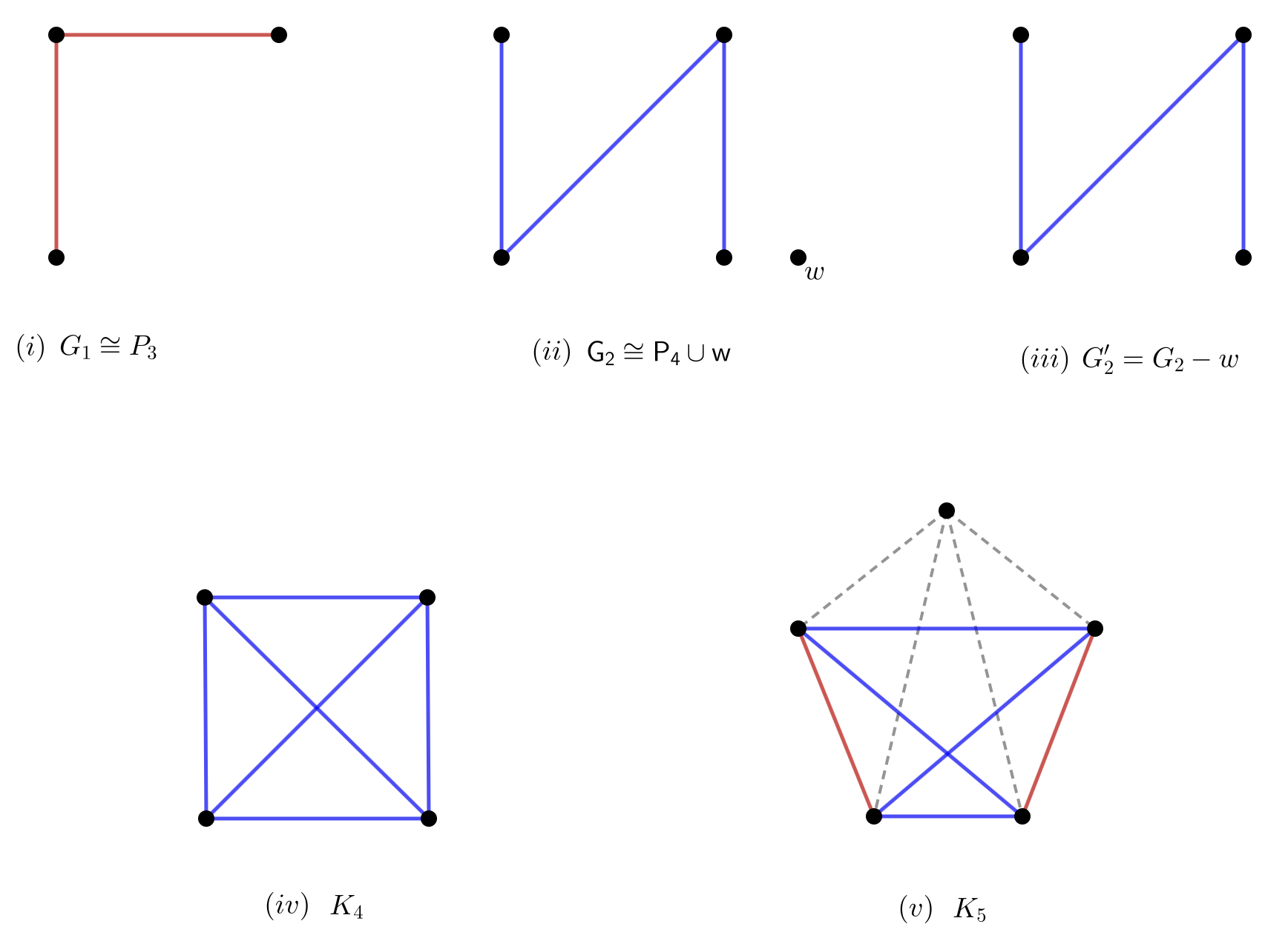}
  \caption{The $2$-colorings corresponding to $r(P_4, P_4 \cup w)=5$ and $r(P_3, P_4)=4$.}\label{ex2}
\end{figure}
Recall that $r(G_1,G^{\prime}_2) = r(P_3, P_4) = 4$. Since, $|V(G_2)| = 5$ and a blue $K_4$ does not contain a red $G_1$, we have $r(G_1, G_2) \geq 5$ (see $(iv)$ in Figure \ref{ex2}). 
Now we will show that $r(G_1, G_2) = 5$. We consider a red-blue coloring of $K_5$ that avoids a red copy of $G_1$. If a red $P_3$ is avoided, then there are at most two disjoint red edges and the remaining edges are blue. Then it contains a blue $P_4$ which is extendible to a copy of a blue $G_2$ (see $(v)$ in Figure \ref{ex2}).
\end{example}

Now we are ready to prove a theorem which provides equivalent criteria for when $r_{\ast}({\mathcal{G}})$ attains the value $0$.

\begin{theorem}\label{star-crit-zero-thm}
Let $t \geq 2$ be an integer and ${\mathcal{G}} = \{ G_i \ | \  i \in [1,t] \}$ be a multiset of simple graphs of order at least $2$. Then $r_{\ast}({\mathcal{G}}) = 0$ if and only if one of the following statements hold:
\begin{enumerate}
\item[(i)] $G_j$ is discrete for some $j \in [1,t]$, 
\item[(ii)] $G_1, \dotsc, G_t$ are non-discrete graphs and $r({\mathcal{G}}_{I_{00}}) = r({\mathcal{G}}) - 1$.
\end{enumerate}
\end{theorem}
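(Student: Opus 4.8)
The plan is to prove both directions by carefully analyzing the structure of $K_{r(\mathcal G)-1} \sqcup K_{1,0}$, which is just the disjoint union $K_{r(\mathcal G)-1} \cup \{w\}$ of a complete graph with an isolated vertex. By definition, $r_\ast(\mathcal G) = 0$ precisely when \emph{every} $t$-coloring of this graph (equivalently, every $t$-coloring of $K_{r(\mathcal G)-1}$, since the extra vertex is isolated and contributes no edges) contains a monochromatic copy of $G_i$ in color $i$ for some $i$. But $K_{r(\mathcal G)-1}$ has a critical coloring avoiding all such copies, so a monochromatic $G_i$ can only be forced by \emph{using the isolated vertex $w$ as one of the vertices of $G_i$}. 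Thus $r_\ast(\mathcal G) = 0$ if and only if, for every critical coloring of $K_{r(\mathcal G)-1}$, appending an isolated vertex creates a monochromatic $G_i$ in color $i$; equivalently, every such $G_i$ that gets created must itself have an isolated vertex (since $w$ has degree $0$ in $K_{r(\mathcal G)-1} \cup \{w\}$), and $G_i - (\text{that vertex})$ must already sit in the critical coloring.

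For the forward direction I would argue the contrapositive: suppose neither (i) nor (ii) holds, i.e., all $G_i$ are non-discrete and $r(\mathcal G_{I_{00}}) = r(\mathcal G)$ (note that $r(\mathcal G_{I_{00}}) \le r(\mathcal G)$ always, since each $F_i$ is a subgraph of $G_i$; and by Lemma~\ref{bounding-lem} applied with $J = I_{00}$ the only other possibility is $r(\mathcal G) - 1$, which is exactly case (ii)). Then $K_{r(\mathcal G_{I_{00}}) - 1} = K_{r(\mathcal G)-1}$ admits a coloring $\mathcal H$ avoiding $F_i$ in color $i$ for all $i$. I claim that this same coloring, viewed on $K_{r(\mathcal G)-1} \cup \{w\}$, still avoids every $G_i$ in color $i$. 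Indeed if a monochromatic $G_i$ appeared in color $i$, it either lies entirely in $K_{r(\mathcal G)-1}$ — impossible since $\mathcal H$ restricted there is critical — or it uses $w$. In the latter case $w$ must be the (necessarily unique, up to isomorphism) isolated vertex of $G_i$, so $i \in I_0$; moreover $G_i - w \cong G_i'$ embeds in $K_{r(\mathcal G)-1} = K_{r(\mathcal G_{I_{00}})-1}$ in color $i$, so $|V(G_i')| \le r(\mathcal G)-1$, whence $|V(G_i)| \le r(\mathcal G)$. Combined with $|V(G_i)| \ge r(\mathcal G)$ — which I still need to justify and is the delicate point — we get $i \in I_{00}$, so $F_i = G_i'$, contradicting that $\mathcal H$ avoids $F_i$ in color $i$. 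Hence $\mathcal H$ witnesses $r_\ast(\mathcal G) \ge 1$.

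For the converse I would show that if (i) holds then $r_\ast(\mathcal G)=0$ trivially (a discrete $G_j$ is a monochromatic subgraph of color $j$ in \emph{any} coloring of $K_{r(\mathcal G)-1}$ on enough vertices — one must check $r(\mathcal G) - 1 \ge |V(G_j)|$, which follows because $r(\mathcal G) \ge |V(G_j)|$ always and the discrete graph forces strict inequality appropriately), and if (ii) holds, take any $t$-coloring of $K_{r(\mathcal G)-1} \cup \{w\}$; its restriction to $K_{r(\mathcal G)-1} = K_{r(\mathcal G_{I_{00}})-1 + 1}$... wait, more precisely, since $r(\mathcal G_{I_{00}}) = r(\mathcal G)-1$, any coloring of $K_{r(\mathcal G)-1}$ contains some $F_i$ in color $i$; if $i \notin I_{00}$ then $F_i = G_i$ and we are done, while if $i \in I_{00}$ then $F_i = G_i'$ and adjoining $w$ as the missing isolated vertex completes a monochromatic $G_i$ in color $i$ inside $K_{r(\mathcal G)-1}\cup\{w\}$. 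Either way $r_\ast(\mathcal G) = 0$.

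The main obstacle I anticipate is the bookkeeping around the index sets and the claim ``a forced monochromatic $G_i$ using $w$ forces $i \in I_{00}$'': one must carefully argue that $w$ plays the role of an isolated vertex of $G_i$ (so $\delta(G_i)=0$, i.e.\ $i \in I_0$) and pin down $|V(G_i)| = r(\mathcal G)$ exactly, ruling out $|V(G_i)| < r(\mathcal G)$ by observing that in that case $G_i$ would already embed monochromatically in a critical coloring of $K_{r(\mathcal G)-1}$ on a vertex subset of size $|V(G_i)| \le r(\mathcal G)-1$ — but this needs the non-discreteness hypothesis and a Ramsey-minimality argument to avoid a contradiction with criticality. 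Handling the multiset subtleties (several $G_i$ isomorphic, $I_{00}$ possibly containing repeats) and the degenerate sub-case $I_{00} = \emptyset$ (where $\mathcal G_{I_{00}} = \mathcal G$ and (ii) cannot hold) will require care but should be routine once the main dichotomy is set up via Lemma~\ref{bounding-lem}.
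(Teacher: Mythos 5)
Your overall strategy is the same as the paper's: Lemma \ref{bounding-lem} reduces matters to the dichotomy $r({\mathcal{G}}_{I_{00}}) \in \{ r({\mathcal{G}})-1, \, r({\mathcal{G}}) \}$, and both directions turn on the observation that a monochromatic $G_i$ created by the appended vertex must use that vertex as an isolated vertex of $G_i$, so that $G_i^{\prime}$ already embeds in the underlying complete graph. The ``delicate point'' you flag (forcing $|V(G_i)| = r({\mathcal{G}})$, hence $i \in I_{00}$) is resolved exactly as you sketch and exactly as the paper does: if $|V(G_i)| < r({\mathcal{G}})$ then $|V(G_i^{\prime})| \leq r({\mathcal{G}}) - 2$, so a spare vertex of $K_{r({\mathcal{G}})-1}$ can play the role of the isolated vertex and a copy of $G_i = F_i$ in color $i$ appears inside ${\mathcal{H}}$ itself, a contradiction. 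No Ramsey-minimality argument is needed, only this replacement/counting step, so that part of your plan goes through.

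The one step that is wrong as written is your justification that (i) implies $r_{\ast}({\mathcal{G}}) = 0$. You assert that a discrete $G_j$ already sits monochromatically inside every coloring of $K_{r({\mathcal{G}})-1}$, subject to checking $r({\mathcal{G}}) - 1 \geq |V(G_j)|$. That inequality can never hold: for any discrete $G_j$, every $t$-coloring of $K_{|V(G_j)|}$ vacuously contains $G_j$ in color $j$ (no edges are required), so $r({\mathcal{G}}) \leq |V(G_j)|$ and hence $r({\mathcal{G}}) - 1 < |V(G_j)|$. (Your auxiliary claim that $r({\mathcal{G}}) \geq |V(G_j)|$ holds for all $j$ is also false in general; take $G_1$ the discrete graph on two vertices and $G_2 = K_{100}$, where $r({\mathcal{G}}) = 2$.) The correct argument is that $r({\mathcal{G}})$ equals the \emph{minimum} of $|V(G_j)|$ over the discrete $G_j$ --- the lower bound coming from coloring all edges of $K_{r({\mathcal{G}})-1}$ with the color of that minimal discrete graph --- so the minimal discrete graph fits only after the isolated vertex is adjoined: $K_{r({\mathcal{G}})-1} \sqcup K_{1,0}$ has exactly $|V(G_{j_0})|$ vertices, and therefore every critical coloring together with an isolated vertex contains $G_{j_0}$ in color $j_0$. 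With that repair (and tightening of the parenthetical), your proposal reproduces the paper's proof.
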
 

\begin{proof}
($\Longleftarrow$) First assume that (i) is true. We claim that 
\begin{equation}\label{star-crit-zero-ineq1}
r({\mathcal{G}}) = \min \big\{ |V(G_i)| \ | \  G_i \mbox{ is discrete and }  i \in [1,t] \big\}.
\end{equation} 
Let $j_0 \in [1,t]$ be chosen such that $G_{j_0}$ is discrete, and 
\[
|V(G_{j_0})| = \min \big\{ |V(G_i)| \ | \ G_i  \mbox{ is discrete and }  i \in [1,t] \big\}.
\] 
Since a $K_{|V(G_{j_0})| - 1}$ with all edges colored $i$ is a $t$-coloring which avoids a copy $G_i$ in color $i$, for all $i \in [1,t]$, it follows that $r({\mathcal{G}}) \geq |V(G_{j_0})|$. 
Next, for any integer $k \geq |V(G_{j_0})|$, every $t$-coloring of $K_k$ contains a copy of the discrete graph $G_{j_0}$ in color $j_0$. This implies that $r({\mathcal{G}}) \leq |V(G_{j_0})|$, and Equation (\ref{star-crit-zero-ineq1}) follows. 

Let $i_0 \in [1,t]$ with $G_{i_0}$ discrete so that $r({\mathcal{G}}) = |V(G_{i_0})|$. Let ${\mathcal{H}}$ be an arbitrary critical coloring of $K_{|V(G_{i_0})| - 1}$; that is, $\mathcal{H}$ is a $t$-coloring of $K_{|V(G_{i_0})| - 1}$ that avoids a copy of $G_i$ in color $i$, for all $i \in [1,t]$. Introduce a new vertex $v$ and consider the disjoint union ${\mathcal{H}} \cup v$. Since $G_{i_0}$ is discrete with $|V(G_{i_0})|$ many vertices, ${\mathcal{H}} \cup v$ contains a copy of $G_{i_0}$ in color $i_0$. Since ${\mathcal{H}}$ was arbitrary, we have $r_{\ast}({\mathcal{G}}) = 0$.  

Now we show that (ii) implies $r_{\ast}({\mathcal{G}}) = 0$. Set $n := r({\mathcal{G}})$, and consider a critical coloring ${\mathcal{H}}$ for $r({\mathcal{G}})$; that is, $\mathcal{H}$ is a $t$-coloring of $K_{n-1}$ which avoids a copy of $G_i$ in color $i$, for every $i \in [1,t]$. Since $n-1 = r({\mathcal{G}}_{I_{00}})$, ${\mathcal{H}}$ contains a copy of $G^{\prime}_{j_0}$ for some $j_0 \in I_{00}$. By definition of $I_{00}$ and ${\mathcal{G}}_{I_{00}}$, $G_{j_0}$ contains an isolated vertex $w$ and $|V(G_{j_0})| = r({\mathcal{G}})$. Introduce a new isolated vertex $v$ to the critical coloring ${\mathcal{H}}$. Then this isolated vertex $v$ along with the copy of $G^{\prime}_{j_0}$ in ${\mathcal{H}}$ is isomorphic to the disjoint union $G^{\prime}_{j_0} \cup w \cong G_{j_0}$. Hence ${\mathcal{H}} \cup v$ contains a copy of $G_{j_0}$ in color $j_0$. Since we started with an arbitrary critical coloring ${\mathcal{H}}$ of $r({\mathcal{G}})$, it follows that $r_{\ast}({\mathcal{G}}) = 0$.

($\Longrightarrow$) We assume that $r_{\ast}({\mathcal{G}}) = 0$ and $G_1, \dotsc, G_t$ are non-discrete. From Lemma \ref{bounding-lem}, we have $r({\mathcal{G}}) - 1 \leq r({\mathcal{G}}_{I_{00}}) \leq r({\mathcal{G}})$. It remains to prove that $r({\mathcal{G}}_{I_{00}}) = r({\mathcal{G}}) - 1$.
If possible assume that $r({\mathcal{G}}_{I_{00}}) = r({\mathcal{G}}) =: n$. Consider any critical coloring of ${\mathcal{G}}_{I_{00}}$; that is, a $t$-coloring ${\mathcal{Q}}$ of $K_{n-1}$ that avoids a copy of $G^{\prime}_i$ (respectively, $G_i$) in color $i$ for each $i \in I_{00}$ (respectively, $i \in [1,t] - I_{00}$). Then ${\mathcal{Q}}$ is also a critical coloring of ${\mathcal{G}}$. Introduce an isolated vertex $v$ and define ${\overline{\mathcal{Q}}} := {\mathcal{Q}} \cup v$. Since $r_{\ast}({\mathcal{G}}) = 0$, it follows that ${\overline{\mathcal{Q}}}$ contains a copy of $G_{\gamma}$ in ${\overline{\mathcal{Q}}}$ for some $\gamma \in [1,t]$ and $\delta(G_{\gamma}) = 0$. 
Now, this implies that $\gamma \in I_0$. If possible, suppose $\gamma \in I_0 - I_{00}$. Then $|V(G_{\gamma})| < n$ and replacing the vertex $v$ by a vertex from ${\mathcal{Q}}$, we have a copy of $G_{\gamma}$ in color $\gamma$ in ${\mathcal{Q}}$, a contradiction. Hence $\gamma \in I_{00}$. Now $|V(G_{\gamma})| = r({\mathcal{G}}) = n$, and still has an isolated vertex. In this case, removing the isolated vertex from $G_{\gamma}$ yields $G^{\prime}_{\gamma}$, which has a copy inside ${\mathcal{Q}}$ in color $\gamma$, again a contradiction.
\end{proof}

Theorem \ref{star-crit-zero-thm} leads to the following corollary, which is used for certain calculations in the next section.

\begin{corollary}\label{cor-lem-star-crit-zero}
Let $t \geq 2$ be an integer and ${\mathcal{G}} = \{ G_i \ | \  i \in [1,t] \}$ be a multiset of simple graphs of order at least $2$. Then we have the following.
\begin{enumerate}
\item[(i)] If $G_j$ is discrete for some $j \in [1,t]$, then 
\[
r({\mathcal{G}}) = \min \big\{ |V(G_i)| \ | \  G_i \mbox{ is discrete and } i \in [1,t] \big\}. 
\]
\item[(ii)] If $G_1, \dotsc, G_t$ are non-discrete graphs and $r({\mathcal{G}}_{I_{00}}) = r({\mathcal{G}}) - 1$, then 
\begin{align} 
r({\mathcal{G}}) &= \max \big\{ |V(G_j)| \ | \  \delta(G_j) = 0 \mbox{ and }  j \in [1,t] \big\} \notag \\
&>\max \big\{ |V(G_j)| \ | \  \delta(G_j) \geq 1 \mbox{ and } j \in [1,t] \big\}. \notag \end{align}
\end{enumerate}
\end{corollary}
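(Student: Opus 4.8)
The plan is to treat the two parts separately. Part (i) needs no new argument: the claimed formula $r(\mathcal{G}) = \min\{|V(G_i)| \mid G_i \text{ is discrete}\}$ is exactly Equation (\ref{star-crit-zero-ineq1}), which was established inside the proof of Theorem \ref{star-crit-zero-thm} for any multiset containing a discrete graph, so I would simply quote it. Part (ii) rests on one elementary observation that I would record first: if $H_1, \dots, H_t$ are all non-discrete, then $r(H_1, \dots, H_t) \geq \max_i |V(H_i)|$. Indeed, picking $i^{*}$ with $m := |V(H_{i^{*}})|$ maximal and coloring every edge of $K_{m-1}$ with color $i^{*}$ produces no copy of $H_{i^{*}}$ in color $i^{*}$ (too few vertices) and no copy of $H_i$ in color $i \neq i^{*}$ (there are no edges in those colors, while each $H_i$ has an edge). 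I would also note the triviality that deleting an isolated vertex from a non-discrete graph leaves a non-discrete graph, so once $G_1, \dots, G_t$ are all non-discrete, every member of $\mathcal{G}_{I_{00}}$ is non-discrete too.

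Now assume the hypotheses of (ii) and set $n := r(\mathcal{G})$. First, $I_{00} \neq \emptyset$, since otherwise $\mathcal{G}_{I_{00}} = \mathcal{G}$ and $r(\mathcal{G}_{I_{00}}) = n \neq n - 1$, contradicting the hypothesis; fixing $j_0 \in I_{00}$ gives $\delta(G_{j_0}) = 0$ and $|V(G_{j_0})| = n$, so $\max\{|V(G_j)| \mid \delta(G_j) = 0\} \geq n$. Conversely, since all $G_i$ are non-discrete, the elementary bound above gives $|V(G_j)| \leq \max_i |V(G_i)| \leq r(\mathcal{G}) = n$ for every $j \in [1,t]$, so in particular that maximum is at most $n$. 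This gives $r(\mathcal{G}) = \max\{|V(G_j)| \mid \delta(G_j) = 0\}$, the first assertion of (ii).

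For the strict inequality, let $j$ be any index with $\delta(G_j) \geq 1$. Then $j \notin I_0 \supseteq I_{00}$, so the $j$-th entry of $\mathcal{G}_{I_{00}}$ is $G_j$ itself; since every member of $\mathcal{G}_{I_{00}}$ is non-discrete, the elementary bound yields $r(\mathcal{G}_{I_{00}}) \geq |V(G_j)|$, whence $|V(G_j)| \leq r(\mathcal{G}_{I_{00}}) = n - 1 < n = r(\mathcal{G})$. Taking the maximum over all such $j$ (with the usual convention $\max\emptyset = 0$, which covers the case where every $G_j$ has an isolated vertex) completes part (ii). I do not expect a genuine obstacle; the only points requiring care are that the hypothesis of (ii) really does force $I_{00} \neq \emptyset$, so the first maximum is over a nonempty index set, and that non-discreteness is preserved by the primed operation --- which is precisely what licenses applying the order bound to $\mathcal{G}_{I_{00}}$.
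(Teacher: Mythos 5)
Your proof is correct, but for part (ii) it takes a genuinely different route from the paper. The paper first invokes Theorem \ref{star-crit-zero-thm} to conclude $r_{\ast}({\mathcal{G}})=0$, and then proves both the equality and the strict inequality by adjoining an isolated vertex to carefully chosen critical colorings (an all-one-color $K_{|V(G_j)|-1}$ for the strict inequality, an arbitrary critical coloring $\mathcal{P}$ for the equality) and extracting a contradiction from where the forced monochromatic copy must sit. You bypass $r_{\ast}({\mathcal{G}})=0$ entirely: you observe that the hypothesis $r({\mathcal{G}}_{I_{00}})=r({\mathcal{G}})-1$ forces $I_{00}\neq\emptyset$, so the equality is immediate from the very definition of $I_{00}$ (any $j_0\in I_{00}$ has $\delta(G_{j_0})=0$ and $|V(G_{j_0})|=r({\mathcal{G}})$) combined with the elementary bound $r\geq\max_i|V(H_i)|$ for non-discrete graphs --- a bound the paper also records and uses. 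Your strict inequality then comes from applying that same bound to the multiset ${\mathcal{G}}_{I_{00}}$, whose $j$-th entry is $G_j$ itself whenever $\delta(G_j)\geq 1$, giving $|V(G_j)|\leq r({\mathcal{G}}_{I_{00}})=r({\mathcal{G}})-1$. You correctly flag the two points that make this legitimate: $I_{00}\neq\emptyset$, and preservation of non-discreteness under deletion of an isolated vertex, which licenses the order bound for ${\mathcal{G}}_{I_{00}}$. Your argument is shorter and more self-contained than the paper's; what the paper's version buys is a demonstration of how the vanishing of the star-critical Ramsey number can itself be leveraged, which is thematically closer to the rest of that section.
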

 
\begin{proof}
Statement (i) is already observed in the first part of the proof of Theorem \ref{star-crit-zero-thm}. To prove Statement (ii), we first notice that $r_{\ast}({\mathcal{G}}) = 0$ from Theorem \ref{star-crit-zero-thm}. Next, we notice that $r({\mathcal{G}}) \geq |V(G_i)|$ for each $i \in [1,t]$. To see this, for an arbitrary $i \in [1,t]$, consider the $t$-coloring of $K_{|V(G_i)| - 1}$ with assigned color $i$ to every edge. Since $G_j$ is non-discrete for every $j \in [1,t]$, this avoids a copy of $G_j$ in color $j$ for every $j \in [1,t]$. 

Now we establish the inequality in Statement (ii). If possible, suppose $r({\mathcal{G}}) = |V(G_j)|$ for some $j \in [1,t]$ with $\delta(G_j) \geq 1$. Consider a $K_{|V(G_j)| - 1}$, assign the color $j$ to all its edges, and call this $t$-coloring ${\mathcal{S}}$. From the previous paragraph, it follows that ${\mathcal{S}}$ is a critical coloring of ${\mathcal{G}}$. Now, introduce a new isolated vertex $v$. Since $r_{\ast}({\mathcal{G}}) = 0$, there must be a copy $K$ of $G_{\alpha}$ in color $\alpha$ in ${\mathcal{S}} \cup v$ for some $\alpha \in [1,t]$. Since all the edges of ${\mathcal{S}} \cup v$ has color $j$ and $G_{\alpha}$ is non-discrete, this forces $\alpha = j$. Since $G_j \cong K \subseteq {\mathcal{S}} \cup v$ and $K \subsetneq {\mathcal{S}}$, this implies that $v \in V(K)$ is an isolated vertex of $K$, and consequently, $G_j$ has an isolated vertex. This contradicts $\delta(G_j) \geq 1$.   

Now we proceed to prove that $$r({\mathcal{G}}) = \max \big\{ |V(G_j)| \ | \  \delta(G_j) = 0 \mbox{ and } j \in [1,t] \big\}.$$ Since $r({\mathcal{G}})$ is an upper bound for $|V(G_i)|$ for all $i \in [1,t]$, as noted earlier, it is enough to show that the equality $r({\mathcal{G}}) = |V(G_j)|$ occurs for some $j \in [1,t]$ with $\delta(G_j) = 0$. 
Suppose this is not true (i.e., $r({\mathcal{G}}) > |V(G_j)|$ for all $j \in [1,t]$ with $\delta(G_j) = 0$). Set $n := r({\mathcal{G}})$, and consider an arbitrary critical coloring ${\mathcal{P}}$ of ${\mathcal{G}}$; i.e., a $t$-coloring of $K_{n-1}$ that avoids a copy of $G_j$ in color $j$ for all $j \in [1,t]$. Introduce a new isolated vertex $x$. Since $r_{\ast}({\mathcal{G}}) = 0$, then ${\mathcal{P}} \cup x$ contains a copy $L$ of $G_{\beta}$ for some $\beta \in [1,t]$. Since $x$ is not joined to ${\mathcal{P}}$ by any edges in ${\mathcal{P}} \cup x$, we have $\delta(G_{\beta}) = 0$. Then $x \in V(L)$, and let $w \in V(G_{\beta})$ be the vertex of $G_{\beta}$ that is identified with vertex $x$. Now $G^{\prime}_{\beta} := G_{\beta} - w \cong  L - x$. From our assumption, $|V(G^{\prime}_{\beta})| < n-1 = |V({\mathcal{P}})|$. Hence, there exists a vertex $z \in V({\mathcal{P}}) - V(L)$. But then $(L - x) \cup z \subseteq {\mathcal{P}}$ is a copy of $G_{\beta}$ in color $\beta$, giving a contradiction. \end{proof}

\section{General Lower Bounds}\label{genlower}

The first general lower bound proved for star-critical Ramsey numbers was due to Zhang, Broersma, and Chen \cite{ZBC}.  In 2016, they proved the following $2$-color lower bound.

\begin{theorem}[\label{ZhangBC}\cite{ZBC}]
Suppose that $G_1$ is a connected graph of order at least $2$ that is $G_2$-good.  If $s(G_2)=1$, $\delta (G_1)=1$, or $\kappa (G_1)\ge 2$, then $$r_*(G_1, G_2)\ge (|V(G_1)|-1)(\chi (G_2)-2)+s(G_2)+\delta (G_1)+\tau (G_2)-2.$$
\end{theorem}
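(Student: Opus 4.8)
The inequality to be proved is a lower bound, so the plan is to exhibit, for
$$ k := (|V(G_1)|-1)(\chi(G_2)-2)+s(G_2)+\delta(G_1)+\tau(G_2)-3, $$
a red/blue coloring of $K_{r(G_1,G_2)-1}\sqcup K_{1,k}$ that contains no red $G_1$ and no blue $G_2$; this forces $r_*(G_1,G_2)\ge k+1$, which is exactly the claimed bound. Write $n:=|V(G_1)|$, $\chi:=\chi(G_2)$, $s:=s(G_2)$, $\delta:=\delta(G_1)$, $\tau:=\tau(G_2)$, and recall that $G_2$-goodness of the connected graph $G_1$ gives $r(G_1,G_2)-1=(n-1)(\chi-1)+s-1$. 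The building block is Burr's critical coloring: partition the $(n-1)(\chi-1)+s-1$ vertices into blocks $A_1,\dots,A_{\chi-1}$ of order $n-1$ and $A_\chi$ of order $s-1$, color all edges inside a block red and all edges between blocks blue. As in Burr's argument this has no red $G_1$ (the red graph is a disjoint union of cliques, each of order $<n$, using $s\le c(G_1)=n$) and no blue $G_2$ (the blue graph is complete $\chi$-partite with a part of order $s-1<s(G_2)$, and $G_2$ is not $(\chi-1)$-colorable).

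The new ingredient is to adjoin an apex vertex $u$ and join it to exactly $k$ of the old vertices: $\delta-1$ of them by red edges, all lying in one big block, say $A_1$; every vertex of $\chi-2$ of the remaining big blocks and of the small block $A_\chi$ by blue edges; and $\tau-1$ further vertices, in the one big block not yet saturated, by blue edges. A short count confirms $\deg(u)=k$. The three hypotheses $s(G_2)=1$, $\delta(G_1)=1$, $\kappa(G_1)\ge2$ split the proof into three cases which specialize this construction: if $\delta=1$ there are no red edges at $u$; if $s=1$ there is no small block $A_\chi$; and if $\kappa(G_1)\ge2$ one may distribute the $\delta-1$ red edges among several blocks. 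In each case one must also check that $k+1$ does not exceed $r(G_1,G_2)-1$ and that the edges at $u$ fit, i.e. there is enough room in $A_1$; this is where goodness (which bounds $\tau$ and $|V(G_2)|$) is invoked.

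Verifying there is no red $G_1$ is the easier half. Every red component is a clique of order $<n$ except the one containing $u$, which has exactly $n$ vertices but in which $u$ has degree at most $\delta-1$; any copy of $G_1$ there would use all $n$ vertices and so would send a vertex of $G_1$ to $u$, but every vertex of $G_1$ has degree at least $\delta(G_1)>\delta-1$, a contradiction. In the case $\kappa(G_1)\ge2$, if the $\delta-1$ red edges at $u$ are spread over two blocks then $u$ is a cut vertex of the enlarged red component, so a $2$-connected $G_1$ would have to lie inside $\{u\}$ together with a single block, and the same degree count applies.

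The heart of the argument, and the step I expect to be the main obstacle, is showing the blue graph $H$ contains no copy of $G_2$. Since $H-u$ is Burr's blue graph, any copy of $G_2$ in $H$ uses $u$; let $x_0$ be the vertex of $G_2$ mapped to $u$. Recording, for each other vertex of $G_2$, the index of the block containing its image gives a proper $\chi$-coloring of $G_2-x_0$ whose class $C_\chi$ (coming from $A_\chi$) has at most $s-1$ vertices and all of whose classes are nonempty. If $x_0$ has no neighbor in the class $C_1$ coming from the partially-joined block $A_1$, then putting $x_0$ in $C_1$ yields a proper $\chi$-coloring of $G_2$ with a color class of size $\le s-1<s(G_2)$, contradicting the definition of the chromatic surplus. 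Otherwise, because $u$ is joined in blue to only $\tau-1$ vertices of $A_1$, the vertex $x_0$ has between $1$ and $\tau-1$ neighbors in $C_1$; when $s=1$ the coloring $\{x_0\},C_1,\dots,C_{\chi-1}$ is already a proper $\chi$-coloring with smallest class $\{x_0\}$, and $\deg_{C_1}(x_0)\le\tau-1<\tau(G_2)$ contradicts the definition of $\tau(G_2)$ outright. For $s\ge2$ one must instead recolor the at most $\tau-1$ offending vertices — this is precisely where the hypotheses $\delta(G_1)=1$ and $\kappa(G_1)\ge2$ are used — so as to produce a genuine proper $\chi$-coloring of $G_2$ with $x_0$ (or a substitute) in a smallest color class and with fewer than $\tau(G_2)$ neighbors in some other class, again contradicting the definition of $\tau(G_2)$. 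Carrying out this exchange uniformly, and confirming the edge budget at $u$ in each of the three cases, is the delicate part; everything else is bookkeeping.
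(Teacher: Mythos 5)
The paper does not actually prove this theorem (it is quoted from \cite{ZBC}), but the correct construction is visible inside the paper's proof of Theorem \ref{lower} (Case 1 with $k=2$), and comparing your construction with it exposes a genuine error. In the correct construction the apex vertex is joined to the small block $A_\chi$ by \emph{red} edges (in the notation of Theorem \ref{lower}, by edges colored as at the vertex $w'$ of $\mathcal{G}_2$), so that in the blue graph the apex has no neighbor in $A_\chi$ and can be absorbed into that part; the blue graph then has a smallest part $A_\chi\cup\{u\}$ of size exactly $s(G_2)$ in which $u$ has only $\tau(G_2)-1$ neighbors in one big part, which is exactly the configuration that the parameter $\tau$ is designed to exclude. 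You instead join $u$ to all of $A_\chi$ by \emph{blue} edges, and that blue graph can contain $G_2$. Concretely, let $G_2$ be the connected bipartite graph obtained from the disjoint union $K_{3,2}\cup K_{2,2}$ by adding a vertex $x_0$ joined to one vertex of the $3$-side of $K_{3,2}$ and to one vertex of the $K_{2,2}$. Then $\chi(G_2)=2$, $s(G_2)=5$, $\tau(G_2)=2$, and $G_2-x_0$ admits a proper $2$-coloring $(D_1,D_2)$ with $|D_2|=4=s(G_2)-1$ in which $x_0$ has exactly one neighbor in each class (flip the bipartition on the $K_{2,2}$ component). Mapping $x_0\mapsto u$, $D_2$ into $A_\chi$, and $D_1$ into $A_1$ with the unique $D_1$-neighbor of $x_0$ going to the unique blue neighbor of $u$ in $A_1$ embeds a blue $G_2$ into your coloring, so it is not a witness for the lower bound. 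This is precisely the subcase your sketch leaves open ($s\ge 2$ with $x_0$ having neighbors in both $C_1$ and $C_\chi$).

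No recoloring can repair this, and the repair you propose cannot be right in principle: $\delta(G_1)=1$ and $\kappa(G_1)\ge 2$ are properties of $G_1$ and have no bearing on proper vertex colorings of $G_2$. Their actual role (as in the proof of Theorem \ref{lower}) is on the \emph{red} side: once the apex sends red edges both into $A_\chi$ and into a big block, the red component containing $u$ spans two blocks with $u$ as a cut vertex, and one of the three hypotheses ($s(G_2)=1$ so that $A_\chi$ is empty, $\delta(G_1)=1$ so that there are no red edges into the big block, or $\kappa(G_1)\ge 2$ so that no $G_1$ can pass through the cut vertex $u$) is needed to rule out a red $G_1$ there. The fact that your red-side verification used none of the three hypotheses is the telltale sign that the construction is not the intended one.
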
 

\noindent In 2018, Hao and Lin (see \cite{HL1} and \cite{HL2}) offered the following variation on Zhang, Broersma, and Chen's lower bound.

\begin{theorem}[\label{HaoLin}\cite{HL1}, \cite{HL2}]
Suppose that $G_2$ is a graph with $\chi (G_2)\ge 2$ and let $G_1$ be a connected graph satisfying $|V(G_1)|\ge s(G_2)+1$.  Then $$r_{\ast}(G_1, G_2)\ge (|V(G_1)|-1)(\chi (G_2)-2) + \min \{|V(G_1)|, \delta (G_1)+\tau (G_2)-1\}.$$  If $\kappa (G_1)\ge 2$ or $\delta (G_1)=1$, then $$r_*(G_1, G_2)\ge (|V(G_1)|-1)(\chi(G_2)-2)+\min \{|V(G_1)|, \delta (G_1)+\tau (G_2)-1\}+s(G_2)-1.$$
\end{theorem}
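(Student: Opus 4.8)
The plan is to prove each inequality by exhibiting, for $k$ equal to one less than the claimed quantity, an explicit $2$-coloring of $K_{r(G_1,G_2)-1}\sqcup K_{1,k}$ that has no copy of $G_1$ in color $1$ and no copy of $G_2$ in color $2$; by the definition of the star-critical Ramsey number this forces $r_*(G_1,G_2)\ge k+1$. Write $n=|V(G_1)|$, $\chi=\chi(G_2)$, $s=s(G_2)$, $\tau=\tau(G_2)$, $\delta=\delta(G_1)$ and $N=r(G_1,G_2)-1$. Since $r(G_1)=n\ge s+1>s(G_2)$, Burr's inequality~(\ref{burr-gen-ineq}) gives $N\ge(n-1)(\chi-1)+s-1$. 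The backbone of the construction is the Burr critical coloring: take disjoint classes $B_1,\dots,B_{\chi-1}$ of size $n-1$ and a class $C_1$ of size $s-1$, color the edges inside each class with color $1$ and the edges between distinct classes with color $2$. Every color-$1$ component is then a clique on fewer than $n$ vertices, so there is no monochromatic $G_1$; and the color-$2$ graph is complete $\chi$-partite with at most $\chi-1$ classes of size $\ge s$, so, since every proper $\chi$-coloring of $G_2$ has all of its classes of size at least $s(G_2)$, no copy of $G_2$ embeds. When $G_1$ is $G_2$-good the backbone already has $N$ vertices; in general one must first enlarge it to $N$ vertices by adjoining further color-$1$ cliques, and verifying that this can be done while keeping the coloring critical --- which when $s(G_2)=1$ cannot be achieved by adjoining small classes and so needs a separate argument --- is one of the points that requires care.

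Next I would adjoin the star vertex $v$ and color its incident edges: color $2$ on every edge from $v$ to $B_2\cup\dots\cup B_{\chi-1}$; inside $B_1$, color $1$ on the edges from $v$ to a chosen set $S$ of $\delta-1$ vertices and color $2$ on the edges from $v$ to a further disjoint set $T$ of $\min\{n-\delta,\tau-1\}$ vertices; and, \emph{only under the hypothesis $\kappa(G_1)\ge 2$ or $\delta(G_1)=1$}, additionally color $1$ on all $s-1$ edges from $v$ to $C_1$. Since $(\delta-1)+\min\{n-\delta,\tau-1\}=\min\{n,\delta+\tau-1\}-1$, the degree of $v$ equals $(n-1)(\chi-2)+\min\{n,\delta+\tau-1\}-1$ in the first construction and exactly $s-1$ more in the second, so that once the colorings are shown valid they witness precisely the two asserted bounds. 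For color $1$ this is the easy half: the only color-$1$ component that has changed is the one through $v$, namely $K_{n-1}\sqcup K_{1,\delta-1}$ on $B_1\cup\{v\}$, or $B_1\cup C_1\cup\{v\}$ with $v$ a cut vertex in the second construction; any embedding of the connected $n$-vertex graph $G_1$ into such a graph must send a vertex $w$ to the apex $v$, whence $\deg_{G_1}(w)\le\delta-1<\delta$, a contradiction --- in the cut-vertex case one first uses $\kappa(G_1)\ge 2$ to conclude that $G_1-w$ is connected and hence lies entirely inside $B_1$, while the case $\delta(G_1)=1$ is immediate, since then $S=\emptyset$ and the component is the too-small $C_1\cup\{v\}$.

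The substantive half is color $2$. A copy of $G_2$ in the color-$2$ graph cannot avoid $v$, else it already sits inside the backbone; so it uses $v$. Pulling the class structure $B_1,\dots,B_{\chi-1},C_1$ of the backbone back through such an embedding $\phi$ yields independent sets $\phi^{-1}(B_i)$ and $\phi^{-1}(C_1)$ in $G_2$; folding the preimage $u_0$ of $v$ into $\phi^{-1}(C_1)$ (permissible because $v$ has no color-$2$ edge to $C_1$), or keeping $\{u_0\}$ as its own class when $\phi^{-1}(C_1)$ is empty, produces a proper $\chi$-coloring of $G_2$ in which the class containing $u_0$ has size at most $s$, hence exactly $s$, and so is a minimum class. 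The definition of $\tau(G_2)$ then applies to $u_0$ and forces $u_0$ to have at least $\tau$ neighbors in the class $\phi^{-1}(B_1)$; but $\phi$ must carry those neighbors into the color-$2$ neighborhood of $v$ inside $B_1$, which is the set $T$ of size at most $\tau-1$ --- a contradiction. The main obstacle is making this paragraph airtight: one must verify that the pulled-back coloring really uses only $\chi$ classes (this is where the sizes $n-1$ and $s-1$ of the backbone classes, the bound $|T|\le\tau-1$, and the exact value $s(G_2)$ all have to be used at once), identify the minimum class and the vertex $u_0$ in it correctly, and accommodate the enlargement of the backbone in the non-$G_2$-good case within the same argument. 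Everything outside this step is bookkeeping and counting.
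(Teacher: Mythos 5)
The paper does not actually prove this theorem; it is quoted from Hao and Lin (\cite{HL1}, \cite{HL2}) and used as a black box, so there is no internal proof to compare against. Judged on its own, your argument is the standard one for results of this type and most of it is sound: the Burr backbone $B_1,\dotsc,B_{\chi-1},C_1$ with color $1$ inside classes and color $2$ between them, the choice of $S$ and $T$ inside $B_1$ (plus the color-$1$ edges to $C_1$ under the extra hypothesis), the color-$1$ verification via the degree of the preimage of $v$ (split into the cases $\kappa(G_1)\ge 2$ and $\delta(G_1)=1$), and the color-$2$ verification by pulling back the partition, folding $u_0$ into $\phi^{-1}(C_1)$, recognizing the resulting class as a minimum class of size exactly $s(G_2)$, and invoking $\tau(G_2)$ against $|T|\le\tau(G_2)-1$. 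The degree count matches both asserted bounds. All of this is correct \emph{provided the backbone already fills $K_{r(G_1,G_2)-1}$}, i.e.\ provided $G_1$ is $G_2$-good.

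The genuine gap is exactly the step you flag and then defer: the theorem as stated here carries no goodness hypothesis, and when $r(G_1,G_2)-1>(|V(G_1)|-1)(\chi(G_2)-1)+s(G_2)-1$ your construction does not produce a coloring of all of $K_{r(G_1,G_2)-1}$. This is not bookkeeping. Adjoining further color-$1$ cliques as new parts makes the color-$2$ graph complete multipartite on more than $\chi(G_2)$ nonempty parts (or, when $s(G_2)=1$, on $\chi(G_2)$ nonempty parts), and such a graph can contain $G_2$; while starting instead from an arbitrary critical coloring of $K_{r(G_1,G_2)-1}$ destroys the class structure on which your entire color-$2$ argument rests. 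So the proof as written establishes the bounds only in the $G_2$-good case --- which is in fact the setting of the original Hao--Lin results and the only case this paper uses --- but does not prove the statement in the generality in which it is displayed. Two minor points: $|T|=\min\{|V(G_1)|-\delta(G_1),\tau(G_2)-1\}$ should be truncated at $0$ when $\tau(G_2)=0$ (the argument still goes through with $T=\emptyset$), and identifying the class containing $u_0$ with $V_1$ in the definition of $\tau_c$ uses the freedom to break ties when ordering the color classes by size; both are harmless but worth a sentence.
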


Theorems \ref{ZhangBC} and \ref{HaoLin} have been used in the determination of various star-critical Ramsey numbers involving cycles, fans, and other graphs.  One limitation of these lower bounds is that they are only $2$-color results.  At present, only one general lower bound is known for multicolor star-critical Ramsey numbers.  An equivalent result to the following theorem was proved by Budden and DeJonge \cite{BD} in 2022 (see also Theorem 1.3 of \cite{B}).

\begin{theorem}[\label{BudDe}\cite{BD}, \cite{B}]
Suppose that $G_1, G_2, \dots ,G_{t-1}$ are connected graphs of order at least $2$.  If $G_t$ is any graph such that $(G_1, G_2, \dots , G_{t-1})$ is $G_t$-good and $$r(G_1, G_2, \dots , G_{t-1})\ge s(G_t),$$ then \begin{align} r_*(G_1, G_2, \dots , G_{t-1}, G_t)\ge r_{\ast}(G_1, G_2, \dots , G_{t-1})&+r(G_1, G_2, \dots , G_{t-1}, G_t) \notag \\ &- r(G_1, G_2, \dots , G_{t-1}).\notag \end{align}
\end{theorem}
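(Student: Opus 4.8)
The plan is to prove the inequality by exhibiting an explicit coloring. Write $m := r(G_1,\dots,G_{t-1})$, $n := r(G_1,\dots,G_t)$, $k_0 := r_*(G_1,\dots,G_{t-1})$, and $\chi := \chi(G_t)$, $s := s(G_t)$; I will assume $\chi \ge 2$. By $G_t$-goodness, $n = (m-1)(\chi-1) + s$, and rearranging yields the key identity
\[
n - m \;=\; (\chi-2)(m-1) + (s-1).
\]
Since $G_1,\dots,G_{t-1}$ are connected of order at least $2$, (\ref{starcritgen}) gives $1 \le k_0 \le m-1$, and the hypothesis $m \ge s$ gives $0 \le s-1 \le m-1$. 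Put $k := k_0 - 1 + (n-m)$. It suffices to produce a $t$-coloring of $K_{n-1} \sqcup K_{1,k}$ that avoids a copy of $G_i$ in color $i$ for every $i \in [1,t]$: by the definition of the star-critical Ramsey number, this forces $r_*(G_1,\dots,G_t) \ge k+1 = k_0 + (n-m)$, which is precisely the asserted bound.

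First I would set up the underlying Burr-type coloring of $K_{n-1}$. Fix a critical coloring $\mathcal{H}$ of $K_{m-1}$ for $r(G_1,\dots,G_{t-1})$ that moreover extends to a $(t-1)$-coloring of $K_{m-1}\sqcup K_{1,k_0-1}$ avoiding $G_i$ in color $i$ for every $i\in[1,t-1]$ — such a coloring exists by definition of $k_0 = r_*(G_1,\dots,G_{t-1})$, and its restriction to the $K_{m-1}$ is automatically critical for $r(G_1,\dots,G_{t-1})$. Partition $V(K_{n-1})$ into blocks $B_1,\dots,B_{\chi-1}$ of size $m-1$ and a block $B_0$ of size $s-1$; this uses exactly $n-1$ vertices by the identity above. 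Inside each $B_j$ with $1\le j\le \chi-1$ place a copy of $\mathcal{H}$, inside $B_0$ place the restriction of $\mathcal{H}$ to an $(s-1)$-element subset (legitimate as $s-1\le m-1$), and color every edge between two distinct blocks with color $t$. As in Burr's argument, for $i \le t-1$ the color-$i$ subgraph is a disjoint union of sub-colorings of $\mathcal{H}$, none containing $G_i$, while the color-$t$ subgraph is complete $\chi$-partite with smallest part of size $s-1 < s$, hence contains no $G_t$.

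Next I would attach the extra vertex $v$. Join $v$ to the copy $W\subseteq B_1$ of the $(k_0-1)$-element set along which the chosen extension of $\mathcal{H}$ is defined, using the same colors (all in $[1,t-1]$) that the extension uses there; and join $v$ to every vertex of $B_0\cup B_2\cup\cdots\cup B_{\chi-1}$ with color $t$. Then
\[
\deg(v) \;=\; (k_0-1) + \big((s-1) + (\chi-2)(m-1)\big) \;=\; k_0-1+(n-m) \;=\; k,
\]
so the resulting graph is exactly $K_{n-1}\sqcup K_{1,k}$. For a color $i\le t-1$, the color-$i$ subgraph is the disjoint union of the color-$i$ part of the extended coloring (supported on $B_1\cup\{v\}$) with the color-$i$ sub-colorings of $\mathcal{H}$ on $B_2,\dots,B_{\chi-1}$ and on $B_0$, with no color-$i$ edge between any two of these pieces; since $G_i$ is connected, any monochromatic $G_i$ in color $i$ would lie inside one piece, and no piece contains one.

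The step I expect to be the crux is checking that color $t$ remains safe, and this is exactly where the construction is tuned to work: in the color-$t$ subgraph $v$ is adjacent to every vertex except those of $B_1$, and $B_1$ is independent in color $t$, so $\{v\}\cup B_1$ is an independent set. Hence the color-$t$ subgraph is again complete multipartite, with parts $\{v\}\cup B_1$ (of size $m$), $B_2,\dots,B_{\chi-1}$ (each of size $m-1$), and $B_0$ (of size $s-1$) — still $\chi$ parts, smallest part of size $s-1<s$ — so it contains no $G_t$. This finishes the coloring, and hence the proof. The only genuinely delicate points are the arithmetic identity $n-m = (\chi-2)(m-1)+(s-1)$, which is what makes $\deg(v)$ come out to exactly $k$, and the observation that routing all of $v$'s color-$t$ edges to the blocks other than $B_1$ merges $v$ into $B_1$'s part and thereby preserves both the number of parts and the size $s-1$ of the critical small part.
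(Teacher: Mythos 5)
The paper states this theorem as a cited result from \cite{BD} and \cite{B} and gives no proof of its own, so there is nothing internal to diverge from; your construction is correct and is essentially the standard argument behind the cited result (a Burr-type blow-up of $K_{n-1}$ with one $(m-1)$-block carrying an extremal coloring of $K_{m-1}\sqcup K_{1,k_0-1}$, the new vertex absorbed into that block's color-$t$ independent set). Your explicit assumption $\chi(G_t)\ge 2$ is a sensible and essentially necessary restriction rather than a gap: when $\chi(G_t)=1$ the conclusion can fail (the paper's Section \ref{star-crit-zero} and Theorem \ref{star-good-zero} treat exactly this degenerate case, where $r_*$ vanishes).
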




Before proving a generalized lower bound for star-critical Ramsey numbers, we introduce the following definition.

\begin{definition}\label{def-hyp-A}
Let $t \geq 2$ be an integer, ${\mathcal{G}} = (G_1, G_2, \dotsc, G_t)$ be a multiset of graphs.  We say that ${\mathcal{G}}$ satisfies {\it Hypothesis A} if the following four conditions are satisfied:
\begin{enumerate}
\item $|V(G_i)|>1$ for all $i\in [1,t]$, 
\item $G_i$ is connected for all $i \in [1,t-1]$,
\item $(G_1, G_2, \dots , G_{k-1})$ is $G_k$-good, for all $k\in [2, t]$, and
\item $r(G_1, G_2, \dots ,G_{k-1})\ge s(G_k)+1$, for all $k\in [2, t]$.
\end{enumerate}
\end{definition}

\noindent In Theorem \ref{star-good-zero}, we address the  exceptional case $r_{\ast}({\mathcal{G}}) = 0$, as discussed in the previous section.

\begin{theorem} \label{star-good-zero}
Let $t \geq 2$ be an integer and ${\mathcal{G}} = (G_1, G_2, \dotsc, G_t)$ be a multiset of graphs that satisfies Hypothesis A. Then the following statements are equivalent:
\begin{enumerate}
\item[(i)] $\chi(G_t) = 1$, 
\item[(ii)] $r_{\ast}({\mathcal{G}}) = 0$.  
\end{enumerate}
\end{theorem}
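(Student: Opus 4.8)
The plan is to prove the equivalence by chasing the two implications through Theorem~\ref{star-crit-zero-thm}, using the \emph{goodness} hypotheses to pin down the relevant Ramsey numbers explicitly.

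\textbf{The implication (i) $\Longrightarrow$ (ii).} Suppose $\chi(G_t) = 1$, i.e.\ $G_t$ is discrete. If $G_t$ has an isolated vertex this is a degenerate but legitimate graph of order at least $2$; in any case, by Theorem~\ref{star-crit-zero-thm}(i) the mere fact that one of the graphs in $\mathcal{G}$ is discrete forces $r_\ast(\mathcal{G}) = 0$. So this direction is essentially immediate once we observe that the multiset $\mathcal{G}$ of Hypothesis~A is the same multiset to which Theorem~\ref{star-crit-zero-thm} applies (all graphs have order $\geq 2$, $t\geq 2$). The only point needing a word of care is that Hypothesis~A is phrased with $G_1,\dots,G_{t-1}$ connected but places no connectivity or non-discreteness restriction on $G_t$, so ``$\chi(G_t)=1$'' is genuinely possible, and then conclusion~(i) of Theorem~\ref{star-crit-zero-thm} applies directly.

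\textbf{The implication (ii) $\Longrightarrow$ (i), by contraposition.} Assume $\chi(G_t)\ge 2$; I want to show $r_\ast(\mathcal{G}) \ge 1$, equivalently that $r_\ast(\mathcal{G}) \ne 0$. Since $G_1,\dots,G_{t-1}$ are connected of order $\ge 2$, they are non-discrete, and $\chi(G_t)\ge 2$ means $G_t$ is non-discrete as well, so clause~(i) of Theorem~\ref{star-crit-zero-thm} fails. It therefore suffices, by Theorem~\ref{star-crit-zero-thm}, to show that clause~(ii) also fails, i.e.\ that $r(\mathcal{G}_{I_{00}}) \ne r(\mathcal{G}) - 1$; by Lemma~\ref{bounding-lem} this amounts to showing $r(\mathcal{G}_{I_{00}}) = r(\mathcal{G})$. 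The key is to use goodness to compute both sides. By Hypothesis~A(3)--(4) applied iteratively with $k=t$, we have the exact value
\[
r(\mathcal{G}) = r(G_1,\dots,G_{t-1},G_t) = (r(G_1,\dots,G_{t-1})-1)(\chi(G_t)-1) + s(G_t).
\]
Now I claim $I_{00}$ is actually empty under these hypotheses, which makes $\mathcal{G}_{I_{00}} = \mathcal{G}$ and hence $r(\mathcal{G}_{I_{00}}) = r(\mathcal{G})$ trivially. Indeed $I_{00}$ consists of indices $i$ with $\delta(G_i)=0$ \emph{and} $|V(G_i)| = r(\mathcal{G})$. For $i \le t-1$ the graph $G_i$ is connected of order $\ge 2$, so $\delta(G_i)\ge 1$ and $i\notin I_0$; thus the only possible element of $I_{00}$ is $t$ itself. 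But if $\delta(G_t)=0$ then $G_t$ contains an isolated vertex, and I must rule out $|V(G_t)| = r(\mathcal{G})$. Using the goodness formula above together with $\chi(G_t)\ge 2$ and Hypothesis~A(4), which gives $r(G_1,\dots,G_{t-1}) \ge s(G_t)+1$, we get
\[
r(\mathcal{G}) = (r(G_1,\dots,G_{t-1})-1)(\chi(G_t)-1) + s(G_t) \ge (s(G_t))(\chi(G_t)-1) + s(G_t) = s(G_t)\,\chi(G_t).
\]
On the other hand $|V(G_t)| \ge \chi(G_t)\cdot s(G_t)$ fails in general — actually the inequality goes the right way only if we are careful, so here is where the real work lies. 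I would instead argue: $|V(G_t)|$ is the sum of the sizes of its $\chi(G_t)$ color classes, the smallest having size $s(G_t)$, so $|V(G_t)| \ge s(G_t) + (\chi(G_t)-1)\cdot 1$ is too weak; a cleaner route is to note directly that if $t\in I_{00}$ and $\delta(G_t)=0$, then applying Theorem~\ref{star-crit-zero-thm} and Corollary~\ref{cor-lem-star-crit-zero}(ii) would force $r(\mathcal{G})$ to equal $\max\{|V(G_j)| : \delta(G_j)=0\}$ and to strictly exceed $\max\{|V(G_j)|:\delta(G_j)\ge 1\}$, which in turn is $\ge |V(G_1)|$; combined with the goodness value of $r(\mathcal{G})$ this yields a contradiction after a short numerical comparison. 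The main obstacle is precisely this last step: carefully showing that the goodness-determined value of $r(\mathcal{G})$ is incompatible with $G_t$ simultaneously being non-discrete, having an isolated vertex, and having $|V(G_t)|=r(\mathcal{G})$. I expect this to come down to the observation that a non-discrete graph with an isolated vertex satisfies $|V(G_t)| \le (\chi(G_t)-1)\bigl(r(G_1,\dots,G_{t-1})-1\bigr) + s(G_t) - \text{(something positive)}$, strictly less than $r(\mathcal{G})$, because removing the isolated vertex drops the order by $1$ while leaving $\chi$ and $s$ essentially controlled — this is exactly the content that makes Lemma~\ref{bounding-lem}'s lower bound strict here. Once $I_{00}=\emptyset$ is established, $r(\mathcal{G}_{I_{00}}) = r(\mathcal{G}) \ne r(\mathcal{G})-1$, clause~(ii) of Theorem~\ref{star-crit-zero-thm} fails, both clauses fail, and therefore $r_\ast(\mathcal{G}) \ne 0$, completing the contrapositive.
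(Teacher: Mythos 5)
Your forward direction (i) $\Rightarrow$ (ii) is correct and is exactly the paper's argument: one of the graphs is discrete, so Theorem~\ref{star-crit-zero-thm}(i) applies. The backward direction, however, has a genuine gap, and it sits precisely where you flag ``the real work.'' Your primary plan is to show that $I_{00}=\emptyset$, so that ${\mathcal{G}}_{I_{00}}={\mathcal{G}}$ and clause (ii) of Theorem~\ref{star-crit-zero-thm} fails trivially. This claim is false under Hypothesis~A: take $t=2$, $G_1=K_3$ and $G_2=K_2\cup K_1$. Then $\chi(G_2)=2$, $s(G_2)=1$, and $r(G_1,G_2)=3=(r(G_1)-1)(\chi(G_2)-1)+s(G_2)$, so Hypothesis~A holds with $\chi(G_2)\geq 2$; yet $\delta(G_2)=0$ and $|V(G_2)|=3=r(G_1,G_2)$, so $I_{00}=\{2\}\neq\emptyset$. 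The same example defeats your fallback, since you ``expect'' a non-discrete $G_t$ with an isolated vertex to satisfy $|V(G_t)|<r({\mathcal{G}})$: here $|V(G_t)|=r({\mathcal{G}})$. (The theorem survives in this example because $r(K_3,K_2)=3=r({\mathcal{G}})$, i.e., $r({\mathcal{G}}_{I_{00}})$ does not drop by $1$ --- but not because $I_{00}$ is empty.) Your alternative route through Corollary~\ref{cor-lem-star-crit-zero}(ii) is legitimate as a proof by contradiction in structure, but the ``short numerical comparison'' you defer to does not exist in the form you describe.

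The missing idea is the following. When $I_{00}=\{t\}$, one must show directly that $r(G_1,\dotsc,G_{t-1},G^{\prime}_t)\neq r({\mathcal{G}})-1$, where $G^{\prime}_t=G_t-w$ for the isolated vertex $w$. Because $\chi(G_t)\geq 2$, the singleton $\{w\}$ can never be a color class in a proper coloring of $G_t$ with exactly $\chi(G_t)$ colors (merging $w$ into another class would save a color), hence $\chi(G^{\prime}_t)=\chi(G_t)$ and $s(G^{\prime}_t)=s(G_t)$. Burr's lower bound, Inequality~(\ref{burr-gen-ineq}), applied to the tuple $(G_1,\dotsc,G_{t-1},G^{\prime}_t)$ then gives
\[
r(G_1,\dotsc,G_{t-1},G^{\prime}_t)\geq (r(G_1,\dotsc,G_{t-1})-1)(\chi(G_t)-1)+s(G_t),
\]
and the right-hand side equals $r({\mathcal{G}})$ by goodness, so the drop by $1$ demanded by clause (ii) of Theorem~\ref{star-crit-zero-thm} is impossible. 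Your closing remark that removing the isolated vertex leaves ``$\chi$ and $s$ essentially controlled'' identifies the right ingredient, but without invoking Burr's bound for the modified tuple the contradiction never materializes; a purely order-counting comparison cannot close the argument, as the $K_3$ versus $K_2\cup K_1$ example shows.
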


\begin{proof}
The statement (i)$\Longrightarrow$(ii) follows from Theorem \ref{star-crit-zero-thm}.  To prove (ii)$\Longrightarrow$(i) we assume that $r_{\ast}({\mathcal{G}}) = 0$. Now suppose (i) is not true (i.e., $\chi(G_t) \geq 2$).  If $G_t$ does not contain an isolated vertex, then $I_{00} \subseteq I_{0} = \emptyset$ using the notation in Section \ref{star-crit-zero}. From Theorem \ref{star-crit-zero-thm}, we have $r({\mathcal{G}}) = r({\mathcal{G}}_{I_{00}}) = r({\mathcal{G}}) - 1$, giving a contradiction.

Now let $w \in V(G_t)$ be an isolated vertex in $G_t$ and set $G^{\prime}_t := G_t - w$. Notice that $I_{0} = \{ t \}$ and again as argued above, we must have $I_{00} = \{ t \}$ as well. Since $\chi(G_t) \geq 2$, we have $G_1, G_2, \dotsc, G_t$ are all non-discrete graphs. Then by part (ii) of Corollary \ref{cor-lem-star-crit-zero}, it follows that $r(G_1, G_2, \dotsc, G_{t-1}, G^{\prime}_t) = r(G_1, G_2, \dotsc, G_t) - 1$.
Now, using $\chi(G_t) \geq 2$, we have $\chi(G_t) = \chi(G^{\prime}_t)$ and $s(G_t) = s(G^{\prime}_t)$ since for any proper vertex coloring of $G_t$, the set $\{ w \}$ cannot form a color class.  Next, using Hypothesis A we have
\begin{eqnarray*}
r(G_1, G_2, \dotsc, G_t) & = & (r(G_1, G_2, \dotsc, G_{t-1}) - 1)(\chi(G_t) - 1) + s(G_t) \\
	& = & (r(G_1, G_2, \dotsc, G_{t-1}) - 1)(\chi(G^{\prime}_t) - 1) + s(G^{\prime}_t)
\end{eqnarray*}
which implies that
\[
r(G_1, G_2, \dotsc, G_{t-1}, G^{\prime}_t) = (r(G_1, G_2, \dotsc, G_{t-1}) - 1)(\chi(G^{\prime}_t) - 1) + s(G^{\prime}_t) - 1.
\]
and the multiset $(G_1, G_2, \dotsc, G_{t-1}, G^{\prime}_t)$ also satisfies the hypothesis required for Inequality (\ref{burr-gen-ineq}). Then we also have
\[
r(G_1, G_2, \dotsc, G_{t-1}, G^{\prime}_t) \geq (r(G_1, G_2, \dotsc, G_{t-1}) - 1)(\chi(G^{\prime}_t) - 1) + s(G^{\prime}_t),
\]
giving a contradiction. 
\end{proof}

Our next goal is to establish a general multicolor lower bound for star-critical Ramsey numbers. Our new bound both extends Theorems \ref{ZhangBC} and \ref{HaoLin} to the multicolor setting and improves the bound given in Theorem \ref{BudDe} for certain collections of graphs. In order to state it, we must iterate the process of introducing graphs to Ramsey and star-critical Ramsey numbers.

\begin{definition}
For a $t$-tuple of graphs $(G_1, G_2, \dots , G_t)$, where $t\ge 2$, define the {\it characteristic sequence} $\{ d_k\}_{k=1}^t$ as follows.  First let $d_1=\delta (G_1)-1$.  Then for each $k\in [2,t]$, let $R_{k-1}=r(G_1, G_2, \dots , G_{k-1})$ and define 
\begin{equation*}
d_k=\left\{\begin{aligned} 
(R_{k-1}-1)&(\chi (G_k)-2)+d_{k-1}+s(G_k)+\tau (G_k)-2  \notag \\
&\mbox{if $s(G_k)=1$, $d_{k-1}=0$, or $\kappa (G_i)\ge 2$ for all $i \in [1,k-1]$} \notag \\
(R_{k-1}-1)&(\chi(G_k)-2)+\min \{R_{k-1}, \tau(G_k)+d_{k-1}\}-1 \notag \\
&\mbox{if $1\le s(G_k)-1\le d_{k-1}$ and $\kappa (G_i)=1$ for some $i \in [1,k-1]$} \notag \\
(R_{k-1}-1)&(\chi(G_k)-2)+s(G_k)+\tau(G_k)-2 \notag \\
&\mbox{if $1\le d_{k-1}<s(G_k)-1$ and $\kappa (G_i)=1$ for some $i \in [1,k-1]$}. \notag 
\end{aligned} \right.
\end{equation*}
\end{definition}

\begin{theorem}\label{lower}
Let $t \geq 2$ be an integer and ${\mathcal{G}} = (G_1, G_2, \dotsc, G_t)$ be a multiset of graphs that satisfies Hypothesis A. Assume that $\chi(G_t) \geq 2$. 
Then $$r_*(G_1, G_2, \dots , G_k)\ge d_k+1,$$ for all $k\in [2,t]$.
\end{theorem}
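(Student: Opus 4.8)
The plan is to prove the bound by induction on $k$, mirroring the structure of the three-case definition of the characteristic sequence and, at each step, adapting the constructions of Zhang--Broersma--Chen and Hao--Lin (Theorems \ref{ZhangBC} and \ref{HaoLin}) to the multicolor setting. For the base case $k=2$ we have $d_1 = \delta(G_1)-1$, and the three cases of the definition of $d_2$ correspond exactly to the hypotheses of Theorems \ref{ZhangBC} and \ref{HaoLin}: when $s(G_2)=1$, or $\delta(G_1)=1$ (i.e. $d_1=0$), or $\kappa(G_1)\ge 2$, Theorem \ref{ZhangBC} gives $r_*(G_1,G_2) \ge (|V(G_1)|-1)(\chi(G_2)-2) + s(G_2) + \delta(G_1) + \tau(G_2) - 2 = d_2+1$ (using $R_1 = r(G_1) = |V(G_1)|$); the remaining two cases ($\kappa(G_1)=1$, split according to whether $s(G_2)-1 \le d_1$ or not) are precisely the two inequalities of Theorem \ref{HaoLin} with the minimum rewritten using $R_1 = |V(G_1)|$ and $\delta(G_1) = d_1 + 1$. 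So the base case is just bookkeeping with the earlier theorems.

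\medskip

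For the inductive step, suppose $r_*(G_1,\dots,G_{k-1}) \ge d_{k-1}+1$, and let $\mathcal{Q}$ be a critical coloring of $K_{R_{k-1}-1}$ for $(G_1,\dots,G_{k-1})$ together with a pendant star $K_{1,d_{k-1}}$ on a new vertex $u$, colored using colors $1,\dots,k-1$ so as to avoid $G_i$ in color $i$ for $i\in[1,k-1]$ (such a coloring exists since $d_{k-1}+1 > d_{k-1}$, unless $d_{k-1}=0$, in which case we simply take the critical coloring of $K_{R_{k-1}-1}$ itself). The $G_k$-goodness in Hypothesis A gives $R_k = (R_{k-1}-1)(\chi(G_k)-1) + s(G_k)$. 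The strategy is to build a critical coloring for $(G_1,\dots,G_k)$ on $K_{R_k-1}$ together with a pendant $K_{1,d_k}$: take $\chi(G_k)-1$ vertex blocks, $\chi(G_k)-2$ of which are copies of $K_{R_{k-1}-1}$ colored by the embedded $\mathcal{Q}$-coloring (so no $G_i$ in color $i<k$), one block a copy of $K_{s(G_k)-1}$, all edges \emph{between} distinct blocks colored $k$; this is the standard Burr/Chvátal--Harary critical coloring witnessing the goodness equation and it contains no $G_k$ in color $k$ because color $k$ induces a complete $(\chi(G_k)-1)$-partite graph with a part of size $s(G_k)-1 < s(G_k)$. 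Then one attaches the new vertex $v$: inside one of the $K_{R_{k-1}-1}$ blocks one reuses the pendant structure from $\mathcal{Q}$ (giving $d_{k-1}$ edges of colors $< k$ from $v$, still avoiding $G_i$), and one adds color-$k$ edges from $v$ to the other blocks. Counting the maximum number of color-$k$-safe edges from $v$ — namely $v$ can see all of the $\chi(G_k)-2$ full blocks and the small block via color $k$ without creating a $G_k$, \emph{plus} the $d_{k-1}$ colors-$<k$ edges, but one must be careful that adding color-$k$ edges from $v$ into a block already containing a near-copy of $G_k$ does not complete one — gives exactly $d_k$, matching the three cases of the definition. The $\min\{R_{k-1}, \tau(G_k)+d_{k-1}\}$ term and the appearance of $\tau(G_k)$ arise exactly as in the two-color proofs: $\tau(G_k)$ governs how many color-$k$ edges $v$ may send into the small block before a copy of $G_k$ using $v$ and $s(G_k)-1$ vertices of that block closes up, and the case split on $s(G_k)-1$ versus $d_{k-1}$ and on $\kappa(G_i)$ reflects whether the low-degree vertex can be placed to avoid the connectivity obstruction.

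\medskip

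\textbf{The main obstacle} I expect is the careful edge-counting in the inductive step: one must verify that the $d_k$ edges attached to the new vertex $v$ genuinely avoid a color-$k$ copy of $G_k$ simultaneously with avoiding all the lower-color copies, and that this is sharp. Concretely, the delicate point is that the pendant edges of colors $1,\dots,k-1$ at $v$ live inside one specific block, and the color-$k$ edges from $v$ reach the other blocks; ensuring these two structures do not interact to create an unwanted monochromatic subgraph requires knowing that $G_k$ (being connected or at least with $\chi(G_k)\ge 2$) cannot be split across the partite structure in color $k$, and that the $d_{k-1}$-pendant in the sub-coloring $\mathcal{Q}$ is genuinely $G_i$-free for $i<k$ by the inductive hypothesis. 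The three-way case analysis in the definition of $d_k$ is exactly what is needed to make this work — each case corresponds to a different placement of the minimum-degree/low-connectivity vertex of the "host" side, just as in the Hao--Lin argument — so the proof is essentially a matter of executing the two-color constructions block-by-block and tracking the arithmetic through the goodness identity $R_k - 1 = (R_{k-1}-1)(\chi(G_k)-1) + s(G_k) - 1$.
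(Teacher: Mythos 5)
Your overall strategy --- induction on $k$, building the Burr blow-up critical coloring with the inductively obtained star-critical witness embedded in one block, and attaching the new center $v$ with replicated pendant edges plus a $\tau(G_k)$-limited set of color-$k$ edges --- is exactly the approach of the paper. Two points in your sketch, however, are genuine problems rather than omitted routine details.

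First, the base case. You claim $k=2$ is ``just bookkeeping'' with Theorems \ref{ZhangBC} and \ref{HaoLin}, the two $\kappa(G_1)=1$ cases being ``precisely the two inequalities of Theorem \ref{HaoLin}.'' But the second Hao--Lin inequality requires $\kappa(G_1)\ge 2$ or $\delta(G_1)=1$, which places you in Case 1 of the definition of $d_2$, not in Cases 2 or 3. In Case 3 (where $1\le d_1 < s(G_2)-1$ and $\kappa(G_1)=1$) the only applicable prior bound is $(R_1-1)(\chi(G_2)-2)+\min\{R_1,\ d_1+\tau(G_2)\}$, and since $d_1+\tau(G_2)<s(G_2)+\tau(G_2)-1$, this is strictly weaker than the claimed $d_2+1=(R_1-1)(\chi(G_2)-2)+s(G_2)+\tau(G_2)-1$ whenever the minimum is attained at its second entry. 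So Case 3 at $k=2$ does not follow from the cited theorems; it already requires the new construction (which is part of the point of the result). The paper sidesteps this by starting the induction at $k=1$ with $r_*(G_1)=\delta(G_1)=d_1+1$ and running the same construction uniformly for every $k\ge 2$.

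Second, the attachment of $v$. Your block count is off by one (there are $\chi(G_k)$ blocks: $\chi(G_k)-1$ copies of $K_{R_{k-1}-1}$ and one of size $s(G_k)-1$), and, more importantly, you join $v$ to the small block by color-$k$ edges. In the paper's main case the $s(G_k)-1$ edges from $v$ to the small block are colored with colors $<k$, copying the edges from an auxiliary vertex $w'$ of the host $K_{R_{k-1}-1}$ to the chosen $s(G_k)-1$ vertices; this is what lets $v$ be absorbed into the small color class of the color-$k$ subgraph (so that class has size exactly $s(G_k)$), and the obstruction to a color-$k$ copy of $G_k$ is then that $v$'s color-$k$ degree into the one partially-joined \emph{large} block is $|T|\le\tau(G_k)-1<\tau(G_k)$. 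Your description (``$\tau(G_k)$ governs how many color-$k$ edges $v$ may send into the small block'') has this mechanism backwards, and if $v$ really were color-$k$-adjacent to all of the small block it could not join that class, so the $\tau$-based exclusion of $G_k$ would not go through as stated. Finally, note that the colors-$<k$ edges at $v$ then touch two different blocks, which is precisely why the $\kappa(G_i)\ge 2$ versus $\kappa(G_i)=1$ trichotomy appears: $v$ is a cut vertex of the low-color subgraph, and only $2$-connectivity of every $G_i$ rules out a low-color copy straddling the two blocks through $v$; otherwise one must delete one of the two groups of pendant edges, which is what produces Cases 2 and 3.
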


\begin{proof}
We proceed by induction on $k\ge 1$.  In the base case $k=1$, we trivially have $r_*(G_1)=\delta (G_1)=d_1+1$.  Now let $2\le k\le t$ and assume that $r_*(G_1, G_2, \dots , G_{k-1})\ge d_{k-1}+1$.  Then there exists a $(k-1)$-coloring of $$K_{r(G_1, G_2, \dots, G_{k-1})-1}\sqcup K_{1, d_{k-1}}$$ that avoids a copy of $G_i$ in color $i$, for all $i\in [1, k-1]$.  Denote this $(k-1)$-colored graph by $\mathcal{G}_1$ and let $w$ be the vertex of degree $d_{k-1}$.  Denote by $\mathcal{G}_2$ the $(k-1)$-colored complete graph formed by deleting vertex $w$ from $\mathcal{G}_1$. Let $D_0 \subset V(\mathcal{G}_2)$ comprise of the $d_{k-1}$ vertices in $\mathcal{G}_1$ that are adjacent to $w$ (note that $D_0$ is a proper subset of $V(\mathcal{G}_2)$).  

Let $\mathcal{G}_3$ be a subgraph of $\mathcal{G}_2$ induced on any subset of  $s(G_k)-1$ vertices in $V(\mathcal{G}_2)$. As $|\mathcal{G}_3|= s(G_k)-1 \leq r(G_1, G_2, \dotsc, G_{k-1})-2= |\mathcal{G}_2|-1$, there exists a vertex $w' \in V(\mathcal{G}_2)-V(\mathcal{G}_3)$. Let $\mathcal{G}_2[V(\mathcal{G}_3)\cup w']$ be the subgraph of $\mathcal{G}_2$ which is induced by the vertices of $\mathcal{G}_3$ along with $w'$. Here, $\mathcal{G}_3 \subseteq \mathcal{G}_2[V(\mathcal{G}_3)\cup w'] \subseteq \mathcal{G}_2 \subseteq \mathcal{G}_1$ and all of them avoid a copy of $G_i$ in color $i$, for all $i\in [1, k-1]$.

From Assumption (3) in the statement of Hypothesis A (Definition \ref{def-hyp-A}), it follows that $$r(G_1, G_2, \dots, G_k)=(r(G_1, G_2, \dots , G_{k-1})-1)(\chi (G_k)-1)+s(G_k).$$  In order to construct a critical coloring for $(G_1, G_2, \dots , G_k)$, begin with a copy of $K_{\chi (G_k)}$ in color $k$ and replace one of its vertices with a copy of $\mathcal{G}_3$ (name this vertex set $X_{\chi(G_k)}$) and the other $\chi (G_k)-1$ of its vertices are replaced with copies of $\mathcal{G}_2$ (name these vertex sets $X_1, X_2, \dots, X_{\chi(G_k)-1}$).  The resulting $$K_{(r(G_1, G_2, \dots , G_{k-1})-1)(\chi (G_k)-1)+s(G_k)-1}$$ avoids a monochromatic copy of $G_i$ in color $i$, for all $i\in [1, k-1]$ since every $G_i$ is assumed to be connected.  To see that it also avoids a copy of $G_k$ in color $k$, consider two cases.  First, if $s(G_k)=1$, then coloring the vertices according to which copy of $K_{r(G_1, G_2, \dots , G_{k-1})-1}$ they are in leads to a proper vertex coloring of the subgraph spanned by edges in color $k$ that uses $\chi (G_k)-1$ colors.  If $s(G_k)>1$, then we obtain a proper vertex coloring for the subgraph spanned by edges in color $k$ that uses $\chi (G_k)$ colors, but has a color class with only $s(G_k)-1$ colors.  In both cases, we see that a copy $G_k$ in color $k$ does not exist.  Hence, we have produced a $k$-coloring of $K_{r(G_1, G_2, \dots , G_k)-1}$ that avoids a copy of $G_i$ in color $i$, for all $i\in [1,k]$.
Introduce a vertex $v$ to this critical coloring above, which will be the centre of the star. We divide the remainder of the proof into cases.

\underline{\bf Case 1} Assume that $s(G_k)=1$, $d_{k-1}=0$, or $\kappa (G_i)\ge 2$ for all $1\le i\le k-1$. 
Join $v$ to vertices in $X_1, X_2, \dots, X_{\chi(G_k)-2}$ by edges in color $k$.
Then $X_{\chi(G_k)-1} \cong \mathcal{G}_2$, and under this isomorphism let $D \subset X_{\chi(G_k)-1}$  correspond to $D_0$ and vertex $z_i \in D$ correpsond to vertex $y_i \in D_0$, for all $i \in [1,d_{k-1}]$. Join $v$ to the vertices in $D$, coloring edge $vz_i$ the same color as edge $wy_i$, for all $i \in [1,d_{k-1}]$. Similar to that of $\mathcal{G}_1$, this coloring avoids a copy of $G_j$ in color $j$, for all $j \in [1,k-1]$.

Also, $X_{\chi(G_k)} \cong \mathcal{G}_3$, and under this isomorphism, let vertex $h_i \in X_{\chi(G_k)}$ correspond to vertex $g_i \in V(\mathcal{G}_3)$, for all $i\in [1, s(G_k)-1]$. Join $v$ to the vertices in $X_{\chi(G_k)}$, coloring edge $vh_i$ the same color as edge $w'g_i$, for all $i\in [1, s(G_k)-1]$. Similar to that of $\mathcal{G}_2[V(\mathcal{G}_3)\cup w']$, this coloring avoids a copy of $G_j$ in color $j$, for all $j \in [1, k-1]$.

Let $T \subseteq X_{\chi(G_k)-1}-D$ be chosen such that $$|T|= \min \{r(G_1, G_2, \dotsc, G_{k-1})-1-d_{k-1}, \tau(G_k)-1\}.$$ Join $v$ to the vertices in $T$ by edges in color $k$.
Call this newly constructed graph $L$ and note that there is no copy of $G_k$ in color $k$ in $L$. Denote by $L_k$ the subgraph of $L$ spanned by edges in color $k$.

Color the vertices in $X_i$ by color $c_i$ such that $c_i \neq c_j$, for all $i,j \in [1, \chi(G_k)]$ and $i \neq j$. As $v$ is not joined to $X_{\chi(G_k)}$ by any edges of color $k$, assign color $c_{\chi(G_k)}$ to $v$. This is a proper vertex coloring of $L_k$ using $\chi(G_k)$ colors.  As $X_i$ is joined to $X_j$ by edges of color $k$ for all $i,j \in [1, \chi(G_k)]$ and $i \neq j$, this is the only possible vertex coloring of $L_k$ using $\chi(G_k)$ colors (up to a permutation of the colors). Corresponding to this vertex coloring, $X_{\chi(G_k)} \cup v$ is the smallest color class and stays so, for all proper vertex colorings of $L_k$. Thus, $$s(L_k)=|X_{\chi(G_k)} \cup v|=s(G_k)-1+1=s(G_k).$$ For each vertex $w_i \in X_{\chi(G_k)}$, $w_i$ is connected to each of the other color classes by $r(G_1, G_2, \dotsc, G_{k-1})-1$ many edges. However, $v$ is connected to $X_{\chi(G_k)-1}$ by $$|T|=\min \{r(G_1, G_2, \dotsc, G_{k-1})-1-d_{k-1}, \tau(G_k)-1\}$$ many edges. 
Thus, $\tau(L_k) = |T| \leq \tau(G_k)-1$ implying $G_k \nsubseteq L_k$.
So, there is no copy of $G_i$ in color $i$ in $L$ for all $i\in [1, k-1]$.
 
If $s(G_k)=1$, then $X_{\chi(G_k)}=\emptyset$. Hence, the $\{1, 2, \dots, k-1\}$ (edge) colored connected components have vertex sets $X_1, X_2, \dotsc, X_{\chi(G_k)-2}, X_{\chi(G_k)-1}\cup v$ individually, which are nothing but copies of $\mathcal{G}_2$ and $\mathcal{G}_1$. Hence, they avoid a copy of $G_i$ in color $i$ for all $i\in [1, k-1]$.

If $d_{k-1}=0$, then $D=\emptyset$ and $v$ is not joined to vertices in $X_{\chi(G_k)-1}$ by any edge of color ${1,2, \dotsc, k-1}$. Hence, the only $\{1, 2, \dots, k-1\}$ (edge) colored connected components have vertex sets $X_1, X_2, \dotsc, X_{\chi(G_k)-2}, X_{\chi(G_k)-1}, X_{\chi(G_k)}\cup v$, which are nothing but copies of $\mathcal{G}_2$ and $\mathcal{G}_2[V(\mathcal{G}_3)\cup w']$. Hence, they avoid a copy of $G_i$ in color $i$ for all $i\in [1, k-1]$.

If $s(G_k)>1$ and $d_{k-1}\ge 1$, then assume that $\kappa(G_i)\ge 2$ for all $i\in [1, k-1]$. If $L$ contains a copy of $G_i$ in color $i$ for some $i\in [1, k-1]$, then from the above two cases it is clear that the copy of $G_i$ must have at least one of it's vertices in $X_{\chi(G_k)}$ and at least one vertex in $X_{\chi(G_k)-1}$, implying $v$ to also be a vertex in this copy of $G_i$. But $v$ is a cut-vertex, implying $\kappa(G_i)=1$, a contradiction. Hence, there is no copy of $G_i$ in color $i$ in $L$ for all $i\in [1, k-1]$.

Thus, $L$ avoids a copy of $G_j$ in color $j$ for all $j\in [1, k]$. If $ \tau(G_k)-1 \geq r(G_1, G_2, \dotsc, G_{k-1}) - 1 - d_{k-1}$ then $v$ is connected to all the vertices in $K_{r(G_1,\dotsc,G_k)-1}$ and we have a $k$-coloring of $K_{r(G_1, G_2, \dotsc, G_k)}$. So this must contain a copy of $G_j$ in color $j$, for some $j\in [1, k]$, giving a contradiction. 

Hence, $\tau(G_k)-1 < r(G_1, G_2, \dotsc, G_{k-1})-1-d_{k-1}$ and $|T|=\tau(G_k)-1$, from which it follows that
\begin{equation*}
    \begin{split}
        r_*(G_1, G_2, \dotsc, G_k) \geq (r(G_1, G_2, \dotsc, G_{k-1})-1)(\chi(G_k)-2) + s(G_k)-1\\ + d_{k-1} + \tau(G_k)-1 +1.
    \end{split}
\end{equation*}
Thus, $r_*(G_1, G_2, \dotsc, G_k) \geq d_k +1$.

\underline{\bf Case 2} Assume that $1\le s(G_k)-1\le d_{k-1}$ and $\kappa (G_i)=1$ for some $i\in [1, k-1]$. 
In this case, remove the edges joining $v$ to $X_{\chi(G_k)}$ in L.
As we are not changing the construction of $k$ colored edges, this new graph does not contain a copy of $G_k$ in color $k$. And all the $\{1, 2, \dots, k-1\}$ (edge) colored connected components have vertex sets $X_1, X_2, \dotsc, X_{\chi(G_k)-2}, X_{\chi(G_k)-1}\cup v$ individually, which are just copies of $\mathcal{G}_2$ and $\mathcal{G}_1$. Hence, there is no copy of $G_j$ in color $j$ for all $j\in [1, k]$. Thus,
\begin{equation*}
    \begin{split}
        r_*(G_1, G_2, \dotsc, G_k) & \geq (r(G_1, G_2, \dotsc, G_{k-1})-1)(\chi(G_k)-2) + d_{k-1} + |T| +1\\
        & = (r(G_1, G_2, \dotsc, G_{k-1})-1)(\chi(G_k)-2) + d_{k-1}\\
        & \hspace{0.5cm} + \min \{r(G_1, G_2, \dotsc, G_{k-1})-1-d_{k-1}, \tau(G_k)-1\} +1\\
        & = (r(G_1, G_2, \dotsc, G_{k-1})-1)(\chi(G_k)-2)\\ 
        & \hspace{0.5cm} + \min \{r(G_1, G_2, \dotsc, G_{k-1}), \tau(G_k)+ d_{k-1}\}-1+1.
    \end{split}
\end{equation*}
Thus, $r_*(G_1, G_2, \dotsc, G_k) \geq d_k +1$.

\underline{\bf Case 3} Assume that $1\le d_{k-1}<s(G_k)-1$ and $\kappa (G_i)=1$ for some $i \in [1, k-1]$. 
In this case, remove the edges joining $v$ to $X_{\chi(G_k)-1}$ in $L$.
As $v$ is not joined to vertices in $X_{\chi(G_k)-1}$ by edges of color ${1,2, \dotsc, k-1}$, the only $\{1, 2, \dots, k-1\}$ (edge) colored connected components have vertex sets $X_1, \dots, X_{\chi(G_k)-1}$ and $X_{\chi(G_k)}\cup v$ individually, which are just copies of $\mathcal{G}_2$ and $\mathcal{G}_2[V(\mathcal{G}_3)\cup w']$.

Here, instead of $T$, let $T_1 \subseteq X_{\chi(G_k)-1}$ such that $|T_1|= \min \{r(G_1, G_2, \dotsc, G_{k-1})-1, \tau(G_k)-1\}$. Join $v$ to vertices in $T_1$ by edges of color $k$. Call this newly constructed graph $\mathcal{L}$. Following the similar vertex coloring argument using $\chi(G_k)$ many colors as in Case I, we can conclude that $\tau(\mathcal{L}_k) \leq \tau(G_k)-1$ implying that there is no copy of $G_k$ in color $k$ in this construction.
Thus, there is no copy of $G_j$ in color $j$ for all $j\in [1, k]$.

If $r(G_1, G_2, \dotsc, G_{k-1})-1 \leq \tau(G_k)-1$ then $v$ is connected to all the vertices in $K_{r(G_1, G_2, \dotsc, G_k)-1}$ and we have a $k$-coloring of $K_{r(G_1, G_2, \dotsc, G_k)}$.   This coloring necessarily contains a copy of $G_j$ in color $j$, for some $j\in [1, k]$, giving a contradiction. Hence, $\tau(G_k)-1 < r(G_1, G_2, \dotsc, G_{k-1})-1$ and $|T_1|=\tau(G_k)-1$.
Thus,
\begin{equation*}
    \begin{split}
        r_*(G_1, G_2, \dotsc, G_k) \geq (r(G_1, G_2, \dotsc, G_{k-1})-1)(\chi(G_k)-2)+s(G_k)-1\\+\tau(G_k)-1 +1,
    \end{split}
\end{equation*}
from which it follows that $r_*(G_1, G_2, \dotsc, G_k) \geq d_{k}+1$
\end{proof}

\section{Some Star-Critical Ramsey Numbers for Paths}\label{3paths}

In 1967, Gerencs\'er and Gy\'arf\'as \cite{GG} proved that if $m\ge n\ge 2$, then $$r(P_m, P_n)=m+\floor{\frac{n}{2}}-1.$$ Hook \cite{H} considered the star-critical analogue of this number by showing that $r_*(P_m, P_n)=\ceil{\frac{n}{2}}$, for all $m\ge n\ge 2$.  In the case of three colors, Maherani, Omidi, Raeisi, and Shahsiah \cite{MORS} proved that for all $m\ge n\ge 3$ and $(m,n)\ne (3,3), (4,3)$, \begin{equation}r(P_m, P_n, P_3)=m+\floor{\frac{n}{2}}-1.\label{rampath}\end{equation}  Note that $(P_m,P_n)$ is $P_3$-good for these values of $m$ and $n$.  The cases $$r(P_3, P_3, P_3)=5=r(P_4, P_3, P_3)$$ can be found in \cite{AKM}.  

Theorem \ref{lower} implies that
if $m\ge n\ge 3$ and $(m,n)\ne (3,3), (4,3)$, then \begin{equation}r_*(P_m, P_n, P_3)\ge \ceil{\frac{n}{2}}+1.\label{3colorlower}\end{equation}
We also need the following well-known factorization theorem.  Recall that a {\it $1$-factor} of a graph $G$ is an independent set of edges that span $G$.  A graph is said to have a {\it $1$-factorization} if its edge set is the disjoint union of $1$-factors.

\begin{theorem}[\label{factor}\cite{Harary}]
If $n$ is even, then the complete graph $K_{n}$ has a $1$-factorization.
\end{theorem}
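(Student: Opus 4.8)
The plan is to give the classical ``round-robin tournament'' (rotational) construction. Write $n = 2m$ and label the vertices of $K_n$ as $v_\infty, v_0, v_1, \dotsc, v_{n-2}$, where the indices $0, 1, \dotsc, n-2$ are to be read as elements of the cyclic group $\Z/(n-1)\Z$ and $v_\infty$ is a distinguished ``central'' vertex. For each $i \in \Z/(n-1)\Z$ define the edge set
$$F_i := \{\, v_\infty v_i \,\} \cup \{\, v_{i+j}\,v_{i-j} \ : \ 1 \le j \le m-1 \,\},$$
with all subscripts other than $\infty$ reduced modulo $n-1$. First I would check that each $F_i$ is a $1$-factor: the vertex $v_\infty$ is covered exactly once, by the edge $v_\infty v_i$, and for the remaining $n-1$ vertices one observes that as $j$ ranges over $\{1, \dotsc, m-1\}$ the pairs $\{i+j,\, i-j\}$ are pairwise disjoint, contain $2(m-1) = n-2$ indices, and avoid $i$, so they partition $(\Z/(n-1)\Z)\setminus\{i\}$. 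This step uses that $n-1$ is odd: $i+j \equiv i-j$ would force $2j \equiv 0$, hence $j \equiv 0$, which is excluded, and $j + j' \equiv 0$ with $1 \le j, j' \le m-1$ is impossible since $2 \le j+j' \le n-2 < n-1$.

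Next I would show that the $F_i$ are pairwise edge-disjoint and together exhaust $E(K_n)$. Since there are $n-1$ factors, each with $m$ edges, the total count is $(n-1)m = \binom{n}{2} = |E(K_n)|$, so it suffices to prove that no edge lies in two of the $F_i$; a cardinality comparison then forces the $F_i$ to cover everything. For this I would argue that each edge determines its factor uniquely: an edge $v_\infty v_a$ lies only in $F_a$, while an edge $v_a v_b$ with $a, b \in \Z/(n-1)\Z$ lies in $F_i$ exactly when $i$ is the ``midpoint'' $i \equiv (a+b)\cdot\overline 2$, where $\overline 2$ denotes the inverse of $2$ in $\Z/(n-1)\Z$ (which exists because $n-1$ is odd). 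Since this $i$ is determined by $\{a,b\}$, the edge belongs to exactly one $F_i$. Hence $\{F_0, F_1, \dotsc, F_{n-2}\}$ is a $1$-factorization of $K_n$.

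The only genuine subtlety — and hence the step I would take most care over — is the modular bookkeeping underlying both verifications: that, because $n-1$ is odd, the assignment $j \mapsto \{i+j,\, i-j\}$ is a bijection from $\{1, \dotsc, m-1\}$ onto the $2$-subsets of $(\Z/(n-1)\Z)\setminus\{i\}$ symmetric about $i$, and that every $2$-subset of $\Z/(n-1)\Z$ is symmetric about exactly one center. Both facts rest on $2$ being invertible modulo $n-1$, which fails precisely when $n$ is odd; this is exactly where the hypothesis ``$n$ even'' enters. Once these two bijections are established, the $1$-factor property of each $F_i$ and the edge-disjoint covering of $K_n$ follow immediately.
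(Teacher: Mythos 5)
Your proof is correct, and the modular bookkeeping (where the oddness of $n-1$, i.e.\ the invertibility of $2$, is used) is handled carefully at exactly the points where it matters. The paper gives no proof of this statement — it is simply quoted from Harary's textbook — and your rotational round-robin construction is precisely the classical argument found in that reference, so there is nothing to compare beyond noting that your write-up supplies the standard proof the paper omits.
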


Using the main result of \cite{FLPS}, we have $r(C_5,P_3)=5$ and $r(C_4,P_3)=4$. In the following theorem we determine $r_{\ast}(C_5,P_3)$, which we will be needed for the main result of this section.

\begin{theorem}\label{star-c5p3}
    $r_{\ast}(C_5,P_3) = 3$.
\end{theorem}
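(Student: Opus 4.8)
The plan is to prove the two bounds $r_*(C_5,P_3)>2$ and $r_*(C_5,P_3)\le 3$ separately. Since $r(C_5,P_3)=5$, the host graph is $K_4\sqcup K_{1,k}$; throughout I write $\{1,2,3,4\}$ for the vertex set of the $K_4$ and $v$ for the apex of the star, and I use repeatedly that blue has no $P_3$ exactly when the blue edges form a matching, and that $K_4$ alone contains no $C_5$.

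For the lower bound I would simply exhibit one $2$-coloring of $K_4\sqcup K_{1,2}$, with $v$ joined to $1$ and $2$, that avoids a red $C_5$ and a blue $P_3$: color every edge of $K_4$ red, color $v1$ red, and color $v2$ blue. The blue subgraph is a single edge, so there is no blue $P_3$; and the only vertex outside $K_4$, namely $v$, has red-degree $1$, so it lies on no red cycle and in particular on no red $C_5$. Hence $r_*(C_5,P_3)>2$.

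For the upper bound, take an arbitrary $2$-coloring of $K_4\sqcup K_{1,3}$ with $v$ adjacent to $1,2,3$, assume the blue edges form a matching, and try to force a red $C_5$. The structural observation is that any $C_5$ here uses all five vertices, hence passes through $v$ along two of the edges $v1,v2,v3$, and deleting $v$ from it leaves a Hamilton path of $K_4$ between those two neighbors; so it is enough to produce two red edges among $v1,v2,v3$, say $vi$ and $vj$, together with an all-red Hamilton path of $K_4$ from $i$ to $j$. Since blue is a matching, at most one of $v1,v2,v3$ is blue, and after relabeling I may assume $v2$ and $v3$ are red. I would then split on the color of the edge $14$. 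If $14$ is red, consider the two Hamilton paths of $K_4$ from $2$ to $3$, namely $(2,1,4,3)$ and $(2,4,1,3)$, with edge sets $\{12,14,34\}$ and $\{13,14,24\}$: if neither is all red then, as $14$ is red, some edge of $\{12,34\}$ and some edge of $\{13,24\}$ is blue; but any one of the first two edges and any one of the second two share an endpoint, contradicting that blue is a matching, so one of the two paths is red and gives a red $C_5$. If $14$ is blue, then no other blue edge meets $1$ or $4$, so $v1$, $12$, $13$, $24$, $34$ are all red, and the five vertices $v,1,3,4,2$ in that cyclic order form a red $C_5$. Either way we obtain a red $C_5$, so $r_*(C_5,P_3)\le 3$, and with the lower bound this yields $r_*(C_5,P_3)=3$.

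The whole argument reduces to short finite checks; the one point that needs care is the $14$-red case, whose punchline relies on the exact facts that the two Hamilton paths of $K_4$ from $2$ to $3$ meet only in the edge $14$ and together omit only the edge $23$, which is precisely what lets the matching hypothesis do the work.
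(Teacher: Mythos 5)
Your proof is correct, and both bounds are handled by routes that differ from the paper's. For the lower bound the paper invokes the general Zhang--Broersma--Chen bound (Theorem \ref{ZhangBC}, with $\tau(P_3)=2$), whereas you exhibit an explicit critical coloring of $K_4\sqcup K_{1,2}$; your construction is self-contained and arguably more transparent, though it proves only this one instance. For the upper bound the paper first applies $r(C_4,P_3)=4$ to extract a red $C_4$ inside the $K_4$ and then splits on how the star center attaches to that cycle and on the colors of the diagonals; you instead observe that any $C_5$ must consist of two red star edges plus a red Hamilton path of the $K_4$ between the corresponding neighbors, reduce to two red star edges $v2,v3$ via the matching property of blue, and split on the color of the edge $14$. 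Your key step checks out: the two Hamilton paths from $2$ to $3$ share only the edge $14$, and each edge of $\{12,34\}$ meets each edge of $\{13,24\}$, so the matching hypothesis forces one path to be entirely red; the $14$-blue case likewise forces all edges at $1$ and $4$ other than $14$ to be red, yielding the cycle $v,1,3,4,2,v$. What your approach buys is independence from the auxiliary facts $r(C_4,P_3)=4$ and Theorem \ref{ZhangBC}; what the paper's buys is a template (find a monochromatic spanning cycle in the critical complete graph, then extend through the star center) that it reuses in Cases 4 and 5 of Theorem \ref{k33theorem}.
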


\begin{proof}
Theorem \ref{ZhangBC} implies that $r_{\ast}(C_5,P_3) \ge 3$. To prove the reverse inequality, consider a $2$-coloring of $K_4 \sqcup K_{1,3}$ using red and blue and let $w$ be the centre vertex of the star.  Since $r(C_4, P_3)=4$, if the $K_4$-subgraph does not contain a blue $P_3$, then it must contain a red $C_4$, which we assume is given by $abcda$.  
As we are to avoid a blue $P_3$, at least $2$ red edges are adjacent to $w$.

If $w$ is adjacent with red edges to two vertices in the red $C_4$ (say $\{a,d\}$), then $wabcdw$ is a red $C_5$ (see image (i) in Figure \ref{c5p3cases}). 
    \begin{figure}[h!]
        \centerline{
        \includegraphics[width=0.9\textwidth]{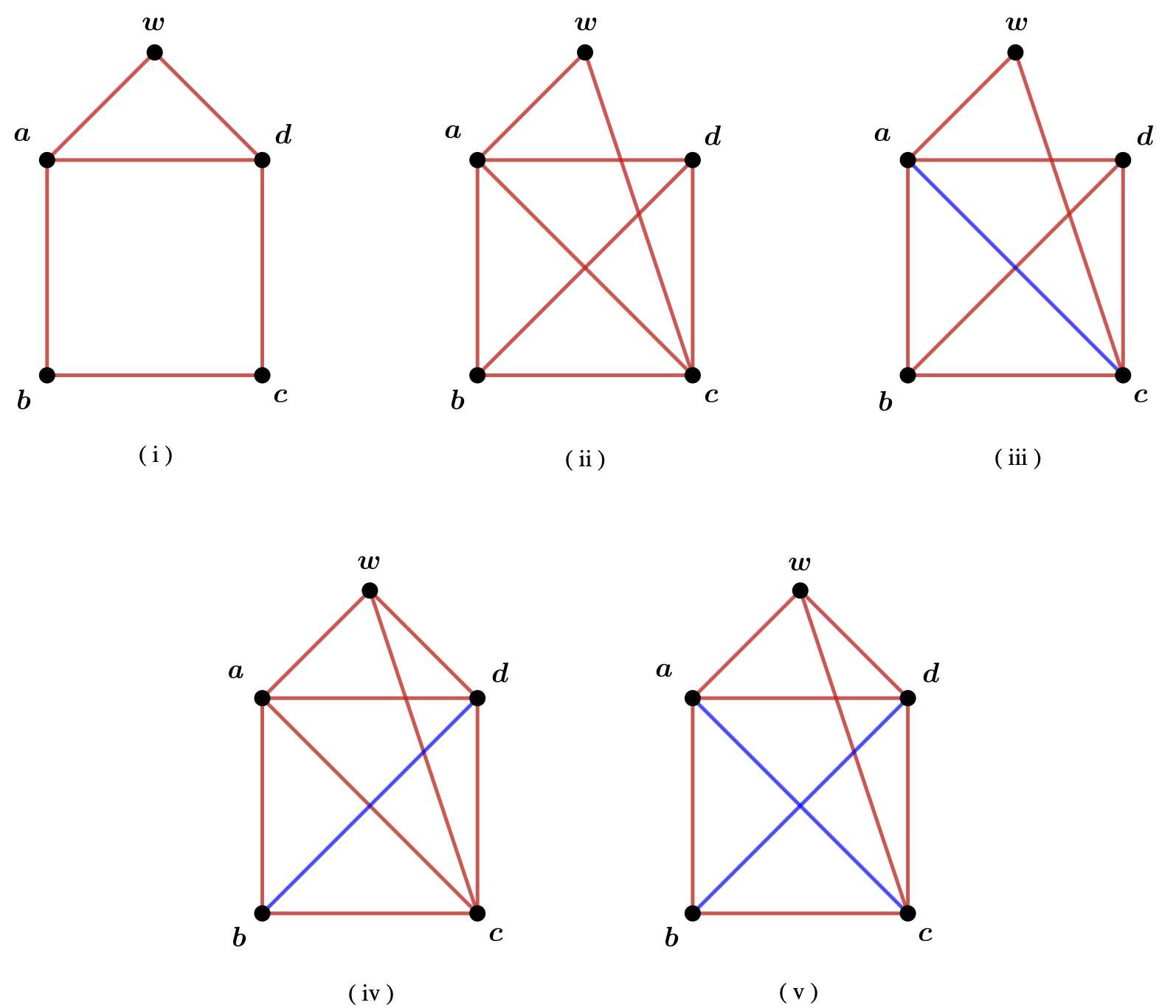} }
        \caption{$2$-colorings of $K_4\sqcup K_{1,3}$ that shows $r_*(C_5,P_3) \leq 3$}\label{c5p3cases}
    \end{figure} 
If $w$ is adjacent with red edges to two non-adjacent diagonal vertices of the red $C_4$ (say $\{a,c\}$), then we consider the colors of $ac$ and $bd$.

If both of $ac$ and $bd$ are red, then $wabdcw$ is a red $C_5$ (see image (ii) in Figure \ref{c5p3cases}). The same argument holds if $ac$ is blue and $bd$ is red (see image (iii) in Figure \ref{c5p3cases}).
If $ac$ is red and $bd$ is blue, then $w$ must also be adjacent to either $b$ or $d$ by a red edge (say $d$), then $wabcdw$ is a red $C_5$ (see image (iv) in Figure \ref{c5p3cases}). The same argument holds if $ac$ and $bd$ are both are blue (see image (v) in Figure \ref{c5p3cases}).  In all cases, we find that there is a red $C_5$ or a blue $P_3$, from which it follows that $r_*(C_5, P_3)\le 3$.
\end{proof}

\begin{theorem}\label{k33theorem}
If $k\ge 3$, then $$r_*(P_k, P_3, P_3)=\left\{ \begin{array}{ll} 1 & \mbox{if $k=3$} \\ 3 & \mbox{if $k=4$} \\ 4 & \mbox{if $k=5$} \\ 3 & \mbox{if $k\ge 6$.}\end{array}\right.$$
\end{theorem}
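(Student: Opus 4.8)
The plan is to split the proof into the four ranges $k=3$, $k=4$, $k=5$, and $k\ge 6$, using throughout that $r(P_3,P_3,P_3)=r(P_4,P_3,P_3)=5$ and $r(P_k,P_3,P_3)=k$ for $k\ge 5$ by (\ref{rampath}); in each case one analyzes colorings of $K_{r-1}\sqcup K_{1,j}$ for the relevant $j$. For $k=3$ one has $r-1=4$: since no $P_3$ is discrete and $I_{00}=\emptyset$, Theorem \ref{star-crit-zero-thm} gives $r_*(P_3,P_3,P_3)\neq 0$; conversely, any critical $3$-coloring of $K_4$ has each color class a matching, hence (as $K_4$ has six edges and a matching of $K_4$ has at most two, cf. Theorem \ref{factor}) must be a $1$-factorization, so appending any pendant edge in color $i$ at a vertex $u$ creates a monochromatic $P_3$ through $u$; thus $r_*=1$.

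For $k=4$ and $k=5$ the pairs $(k,3)$ lie outside the range of Inequality (\ref{3colorlower}), so the lower bounds must come from explicit critical colorings. I would first show that the critical $3$-colorings of $K_4$ for $(P_4,P_3,P_3)$ are exactly the $1$-factorizations (the two $P_3$-colors are matchings, the $P_4$-color is $P_4$-free, and the only $P_4$-free subgraph of $K_4$ whose complement within $K_4$ is a union of two matchings is a perfect matching). Then $r_*(P_4,P_3,P_3)\ge 3$ follows by attaching a new vertex to both ends of the $P_4$-colored matching edge and coloring those two edges in that color, which only creates a monochromatic triangle; and $r_*(P_4,P_3,P_3)\le 3$ follows because in $K_4\sqcup K_{1,3}$ the $K_4$-part is a $1$-factorization, each of the three pendant edges is forced into the $P_4$-color (else a monochromatic $P_3$ via the unique edge of that color at its $K_4$-endpoint), and the other $P_4$-colored matching edge then joins two neighbors of the new vertex, giving a monochromatic $P_4$. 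For $k=5$, $r_*\le 4$ is immediate from $r-1=4$ in (\ref{starcritgen}), and for $r_*\ge 4$ I would exhibit the critical $3$-coloring of $K_4\sqcup K_{1,3}$ in which the first color is a $K_{1,4}$ (with the new vertex and one $K_4$-vertex as the four leaves) together with one further edge completing a triangle, the other two colors being two-edge matchings; the first color has no $P_5$ because the new vertex and one $K_4$-vertex both have degree one in it with a common neighbor, so they cannot both be the ends of a spanning path on five vertices.

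For $k\ge 6$ we have $r-1=k-1$, and Inequality (\ref{3colorlower}) already gives $r_*(P_k,P_3,P_3)\ge 3$, so the work is the reverse bound: in any $3$-coloring of $K_{k-1}\sqcup K_{1,3}$ that avoids a monochromatic $P_3$ in the last two colors (so those classes are matchings), the first color must contain a spanning path $P_k$. Let $R$ be the first-color graph. Its restriction to $K_{k-1}$ is $K_{k-1}$ with two matchings removed, hence has minimum degree at least $k-4$; for $k\ge 7$ this is at least $(k-1)/2$, so Dirac's theorem gives a Hamiltonian cycle in $R|_{K_{k-1}}$, hence a Hamiltonian path starting at any prescribed vertex, and prepending the pendant vertex through one of its first-color neighbors (there is at least one) produces the desired $P_k$.

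\textbf{Main obstacle.} The remaining case $k=6$ is the delicate one: here $R|_{K_5}$ is only guaranteed minimum degree $2$, Dirac fails by a hair, and one checks that $R|_{K_5}$ admits a Hamiltonian path from every vertex unless it is $K_{1,4}$ together with two disjoint edges joining its leaves. This exceptional configuration is ruled out by an argument in the spirit of Theorem \ref{star-c5p3} ($r_*(C_5,P_3)=3$): in that case the two matching colors restricted to $K_5$ are the two perfect matchings of a $4$-cycle on the four non-center vertices, so each such vertex is covered by both matchings, which forces every pendant edge from a non-center neighbor of the new vertex into the first color; hence the new vertex has first-color degree at least two, with a first-color neighbor that is a non-center leaf, and the Hamiltonian path of $R|_{K_5}$ may be started there. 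I expect this interplay between the two matchings and the degree-$3$ pendant vertex to be the only genuinely delicate point: the cases $k\le 5$ reduce to finite verifications, and $k\ge 7$ is a clean density argument.
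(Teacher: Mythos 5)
Your proposal is correct, and for $k\le 5$ it follows essentially the same route as the paper (explicit critical colorings, plus the observation that the critical $3$-colorings of $K_4$ are exactly the $1$-factorizations). Where you genuinely diverge is in the cases $k\ge 6$. The paper handles these by invoking known Ramsey results for cycles: for $k\ge 7$ it uses $r(C_{k-1},P_3,P_3)=k-1$ (Dzido) to extract a red $C_{k-1}$ from the $K_{k-1}$, and for $k=6$ it first proves $r_*(C_5,P_3)=3$ (Theorem \ref{star-c5p3}) and uses it together with $r(C_5,P_3)=5$ to reduce to two explicit subcases. You instead argue directly on the red graph $R$, which restricted to $K_{k-1}$ is $K_{k-1}$ minus the union of two matchings: Dirac's theorem settles $k\ge 7$ cleanly, and for $k=6$ a short finite check shows the only obstruction to a Hamiltonian path from every start vertex is $K_5$ minus a $C_4$ (i.e.\ $K_{1,4}$ plus the two diagonals), where the two matchings are forced to be the two perfect matchings of that $C_4$, so every pendant edge of $v$ landing on a $C_4$-vertex is forced red and the path can be started there. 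I checked this exceptional analysis and it is sound (the exceptional graph has a Hamiltonian path from each of the four leaves, and $v$ must meet at least two leaves). Your route is more self-contained --- it needs only Dirac plus finite verification and dispenses with the citation to Dzido and with Theorem \ref{star-c5p3} entirely --- while the paper's route produces $r_*(C_5,P_3)=3$ as a result of independent interest and fits the cycle-goodness framework. One small correction: for $k=5$ the pair $(5,3)$ is \emph{not} excluded from Equation (\ref{rampath}), so Inequality (\ref{3colorlower}) does apply; it simply yields only $r_*(P_5,P_3,P_3)\ge 3$, which is why the explicit $K_4\sqcup K_{1,3}$ construction is still needed to reach $4$. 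This does not affect your argument, since you supply that construction anyway and it is valid (with $v$ joined to the star's center and to the two triangle leaves, not to the second degree-one leaf).
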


\begin{proof}
We break the proof up into the cases indicated in the statement of the theorem.

\underline{\bf Case 1} Assume that $k=3$ so that $r(P_3, P_3, P_3)=5$.  Consider a $3$-coloring of $K_4\sqcup K_{1,1}$ and let vertex $x$ be the unique vertex of degree $4$.  By the Pigeonhole Principle, at least two of the edges incident with $x$ must be the same color, forming a monochromatic $P_3$.  It follows that $r_*(P_3, P_3, P_3)=1$.

\underline{\bf Case 2} Assume that $k=4$ so that $r(P_4, P_3, P_3)=5$.  By Theorem \ref{factor}, the complete graph $K_4$ can be factored into three $1$-factors.  Color each $1$-factor with a unique color from red, blue, and green with edges $ad$ and $bc$ being red.  Introduce vertex $v$ and the red edges $va$ and $vd$.  The resulting $K_4\sqcup K_{1,2}$ avoids a red $P_4$, a blue $P_3$, and a green $P_3$ (see Figure \ref{P4P3P3case}).  
\begin{figure}[h!]
\centerline{
\includegraphics[width=0.30\textwidth]{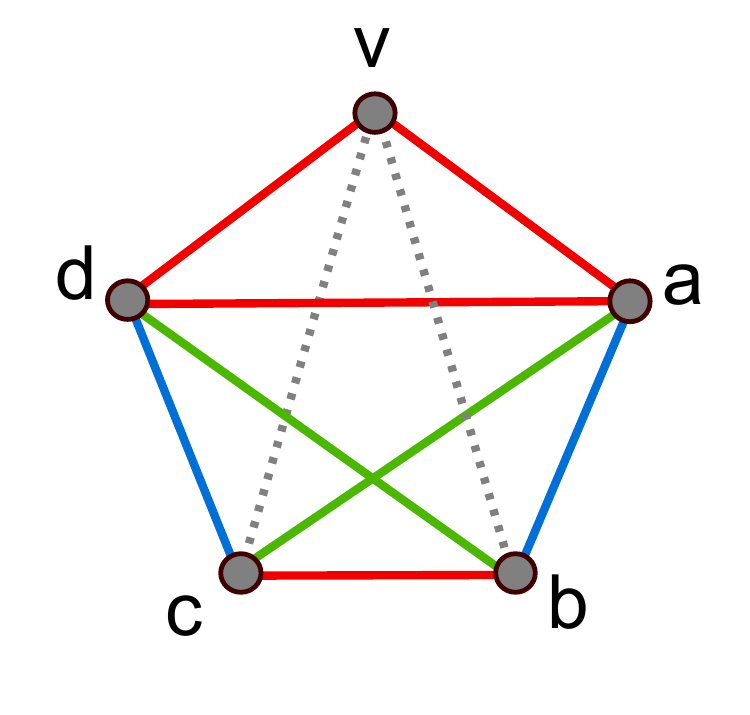} }
\caption{A $3$-coloring of $K_4\sqcup K_{1,2}$ that avoids a red $P_4$, a blue $P_3$, and a green $P_3$.}\label{P4P3P3case}
\end{figure} 
It follows that $r_*(P_4, P_3, P_3)\ge 3$.  In the process of proving the reverse inequality, we will show that this coloring is unique if a red $P_4$, a blue $P_3$, and a green $P_3$ are to be avoided. 

Now consider a $3$-coloring of $K_4\sqcup K_{1,3}$ (using red, blue, and green) and let $v$ be the center vertex of the star.  If any color is missing in the $K_4$, then the Ramsey numbers $r(P_3, P_3)=3$ and $r(P_4, P_3)=4$ imply that there is a red $P_4$, a blue $P_3$, or a green $P_3$. So, every color must appear in the $K_4$.  Denote the vertex set for the $K_4$ by $\{a,b,c,d\}$ and assume that $ab$ is blue. If $cd$ is red or green, then no other blue edges exist in the $K_4$ without forming a blue $P_3$.  Since $$r(P_4, P_3)=4 \quad \mbox{and} \quad r_*(P_4, P_3)=2,$$ it follows that there exists a red $P_4$ or a green $P_3$.  Hence, $cd$ must be blue.  If any three of the edges $ac$, $ad$, $bc$, and $bd$ are red, then they form a red $P_4$ (e.g., if $ac$, $ad$, and $bc$ are red, then $bcad$ is a red $P_4$).  So, exactly two of these edges must be green and they must be disjoint.  Without loss of generality, assume that $ac$ and $bd$ are green and $ad$ and $bc$ are red.  Now introduce vertex $v$, joining it to the $K_4$ with three edges.  If any such edge is blue or green, then a blue or green $P_3$ is formed.  So, all three edges joining $v$ to the $K_4$ are red, and since one such edge must join $v$ to $\{a,d\}$ and one must join $v$ to $\{b, c\}$, we obtain a red $P_4$ (e.g., if $va$ and $vb$ are red, then $davb$ is a red $P_4$).  It follows that $r_*(P_4, P_3)\le 3$.

\underline{\bf Case 3} Assume that $k=5$ so that by Equation (\ref{rampath}), $r(P_5, P_3, P_3)=5$.  Then the $3$-coloring of $K_4\sqcup K_{1,3}$ given in Figure \ref{P5P3P3case} implies that $r_*(P_5, P_3, P_3)=4$.
\begin{figure}[h!]
\centerline{
\includegraphics[width=0.30\textwidth]{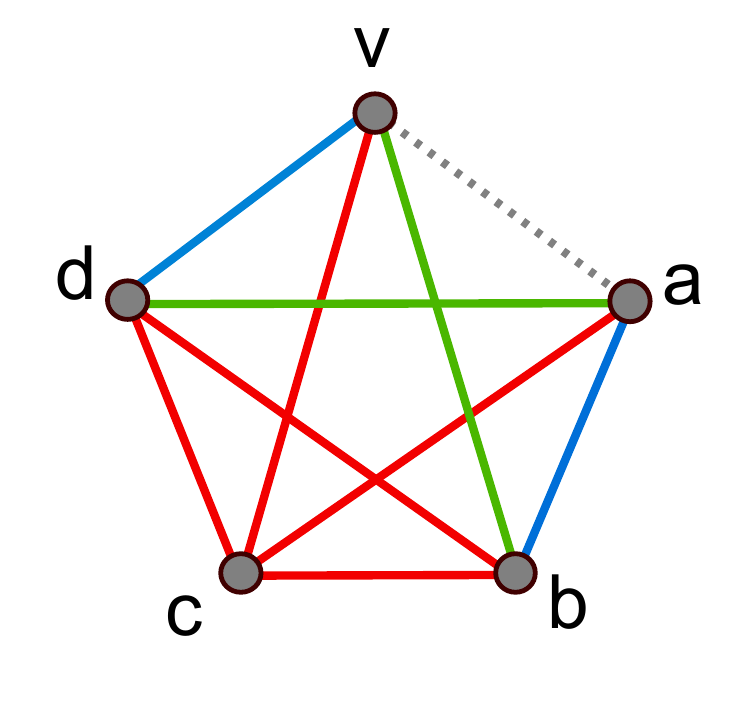} }
\caption{A $3$-coloring of $K_4\sqcup K_{1,3}$ that avoids a red $P_5$, a blue $P_3$, and a green $P_3$.}\label{P5P3P3case}
\end{figure} 

\underline{\bf Case 4} Assume that $k=6$ so that by Equation (\ref{rampath}), $r(P_6, P_3, P_3)=6$.  Inequality (\ref{3colorlower}) implies that $r_*(P_6, P_3, P_3)\ge 3$.
Consider a $3$-colored $K_5 \sqcup K_{1,3}$ (using red, blue, and green) that avoids a blue $P_3$ and a green $P_3$. 
Let $v$ be the center vertex of the star and $H$ be the underlying $K_5$ subgraph.   Since $r(P_5, P_3, P_3)=5$ by Equation (\ref{rampath}), $H$ contains a red $P_5$.
If all the edges incident with $v$ are red, then there exists a red $P_6$.  As we are to avoid a blue $P_3$ and a green $P_3$, at least one edge adjacent to $v$ must be red. If $H$ has at most one blue edge, 
then $r(C_5,P_3)=5$ and $r_{\ast}(C_5,P_3)=3$ (from \cite{FLPS} and Theorem \ref{star-c5p3}) imply that $H$ must contain a red $C_5$ (assume it is given by $x_1x_2x_3x_4x_5x_1$). As $v$ is joined to the $K_5$ subgraph by at least one red edge, without loss of generality let $vx_1$ be red. Then $vx_1x_2x_3x_4x_5$ is a red $P_6$. This implies that all colors are present in the $K_5 \sqcup K_{1,3}$ and the blue and the green subgraphs of $H$ are both matchings of size $2$. Hence, we consider two subcases where the subgraph of $H$ spanned by the green and blue edges has order $4$ or $5$, as shown in Figure \ref{P6P3P3subcases}.

\begin{figure}[h!]
\centerline{
\includegraphics[width=0.25\textwidth]{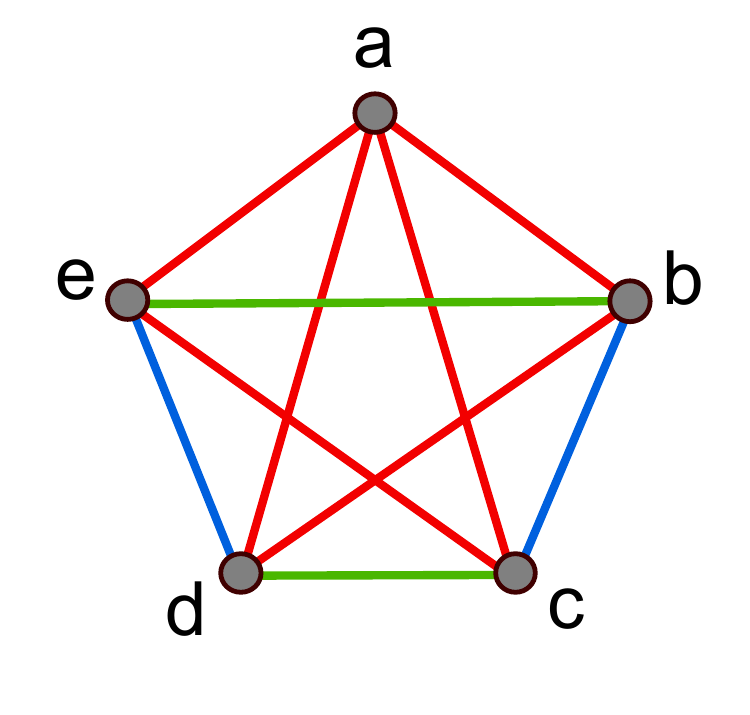} \qquad \includegraphics[width=0.25\textwidth]{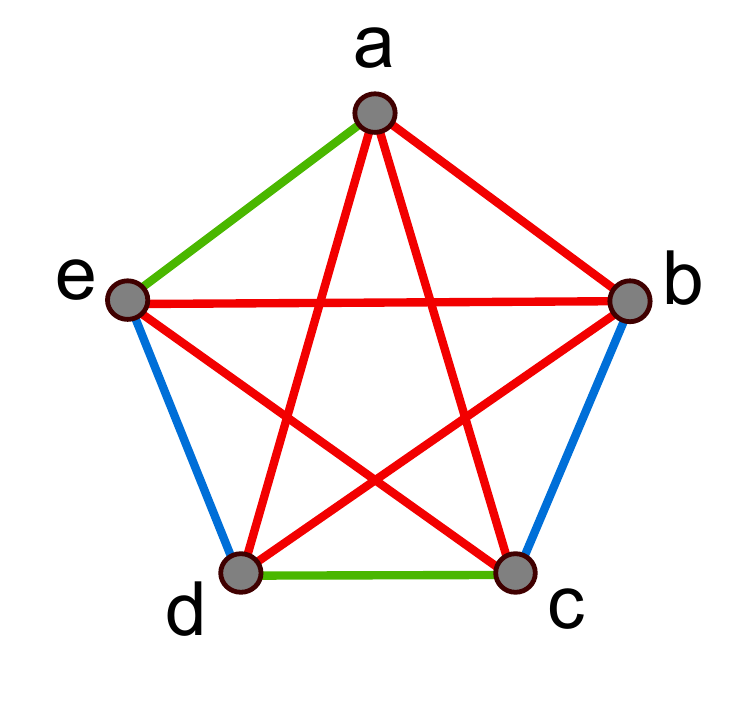}    }
\caption{Two critical colorings for $r(P_6, P_3, P_3)$ corresponding to Case 4 in the proof of Theorem \ref{k33theorem}.}\label{P6P3P3subcases}
\end{figure} 

\smallskip

\underline{Subcase 4.1} Suppose that the subgraph of $H$ spanned by the blue and green edges has order $4$.  Up to isomorphism, $H$ must be colored as in the first image in Figure \ref{P6P3P3subcases}, with the vertices labelled by $a,b,c,d,e$, as shown. Since $v$ joins to $H$ with three edges, at least two of those edges must join to the vertices in $\{b,c,d,e\}$. Without loss of generality, these edges are $(vb,vc)$ or $(vb,vd)$. If either of these edges are blue (resp. green) then a blue (resp. green) $P_3$ is formed, so these must be red. If these edges are $vb$ and $vc$ (resp. $vb$ and $vd$), then $adbvce$ (resp. $ceabvd$) is a red $P_6$.

\underline{Subcase 4.2} Suppose that the subgraph of $H$ spanned by the blue and green edges has order $5$.  Up to isomorphism, $H$ must be colored as in the second image in Figure \ref{P6P3P3subcases}. Note that $cadbec$ is a red $C_5$. Since $v$ joins to $H$ with at least one red edge, without loss of generality assume this to be $va$ or $vc$. Then either $vadbec$, or $vcadbe$ is a red $P_6$.

\smallskip

\underline{\bf Case 5} Assume that $k\ge 7$ so that $r(P_k, P_3, P_3)=k$.  Inequality (\ref{3colorlower}) implies that $r_*(P_k, P_3, P_3)\ge 3$.  Now consider a red-blue-green coloring of $K_{k-1}\sqcup K_{1,3}$ and let $v$ be the center vertex of the missing star.  Since $r(C_{k-1}, P_3, P_3)=k-1$ (see \cite{Dz}), if the subgraph $K_{k-1}$ avoids a blue $P_3$ and a green $P_3$, then it must contain a red $C_{k-1}$ (assume it is given by the vertices $a_1, a_2, \dots, a_{k-1}, a_1$ in this order).  Since $v$ joins to the $K_{k-1}$ via three edges, at least one such edge must be red if a blue $P_3$ and a green $P_3$ are avoided.  Without loss of generality, assume that $va_1$ is red.  Then $va_1a_2\cdots a_{k-1}$ is a red $P_k$.
\end{proof}

\section{Conclusion} 

In the beginning of Section \ref{3paths}, it was noted that $(P_m, P_3)$ is $P_3$-good for all $m\ge 5$.  This observation, along with the known values $r(P_m, P_3, P_3)=m=r(P_m, P_3)$ and $r_*(P_m, P_3)=2$, for all $m\ge 5$, allow us to apply Theorem \ref{BudDe} to obtain the lower bound $r_*(P_m, P_3, P_3)\ge 2$.  Of course, we saw in the previous section that Theorem \ref{lower} offers the improved lower bound $r_*(P_m, P_3, P_3)\ge 3$.
What other multicolor star-critical Ramsey numbers have improved lower bounds from using Theorem \ref{lower}?

\bibliographystyle{amsplain}

\end{document}